\documentclass[12pt]{article}
\usepackage{e-jc}


\usepackage[colorlinks=true,citecolor=black,linkcolor=black,urlcolor=blue]{hyperref}



\dateline{June 15, 2019}{TBD}{TBD}

\MSC{91B12, 05C80}

%
%
\Copyright{The author. Released under the CC BY license (International 4.0).}


\title{Voting Rules That Are Unbiased But Not Transitive-Symmetric}

\author{Aadyot Bhatnagar\\
	\small California Institute of Technology\\[-0.8ex] 
	\small Pasadena, CA, U.S.A.\\
	\small\tt aadyotb@gmail.com}

\usepackage{amssymb,amsmath,physics,dsfont,amsthm}
\usepackage{graphicx,subfig,ctable,booktabs,float}
\usepackage{xifthen}

\usepackage{color}	
\definecolor{deepblue}{rgb}{0,0,0.5}
\definecolor{deepred}{rgb}{0.6,0,0}
\definecolor{deepgreen}{rgb}{0,0.5,0}



\newcommand{\Z}{\mathbb{Z}}

\newcommand{\R}{\mathbb{R}}



\newcommand{\I}{\mathbb{I}}

\newcommand{\one}{\mathds{1}}



\newcommand{\E}[2][]{
	\ifthenelse {\isempty{#1}{}}
	{\mathbb{E}}
	{\operatorname*{\mathbb{E}}_{#1}}
	\left[{#2}\right]
}

\renewcommand{\P}[2][]{
	\ifthenelse {\isempty{#1}{}}
	{\mathbb{P}}
	{\operatorname*{\mathbb{P}}_{#1}}
	\left[{#2}\right]
}

\begin{document}
	
	\maketitle
	
	\begin{abstract}
		We explore the relation between two natural symmetry properties of voting rules. The first is transitive-symmetry --- the property of invariance to a transitive permutation group --- while the second is the ``unbiased'' property of every voter having the same influence for all i.i.d.\ probability measures. We show that these properties are distinct by two constructions --- one probabilistic, one explicit --- of rules that are unbiased but not transitive-symmetric.
	\end{abstract}
	
	\section{Introduction}
	
	We study {\em voting rules}: functions $\Phi : \{-1,1\}^n \to \{-1,1\}$ that map a voting profile --- the preferences of a set $V = [n]$ of $n$ voters between alternatives $-1$ and $1$ --- to an outcome in $\{-1, 1\}$. We restrict our attention to voting rules $\Phi$ that are {\em odd}, i.e.\ such that $\Phi(-x) = - \Phi(x)$, and {\em monotone}, i.e.\ such that $\Phi(x) \ge \Phi(x')$ whenever $x \ge x'$ coordinate-wise.
	
	Let $S_n$ be the set of permutations over $V$. For a permutation $\sigma \in S_n$, we define a permutation of a voting rule $\Phi$ as $(\sigma \Phi)(x) = \Phi(\sigma^{-1}(x))$. We say that $\sigma$ is an {\em automorphism} of $\Phi$ if $\sigma \Phi = \Phi$. We denote by $\mathrm{aut}(\Phi)$ the {\em automorphism group} of $\Phi$. A voting rule $\Phi$ is said to be {\em transitive-symmetric} if $\mathrm{aut}(\Phi)$ is transitive, i.e.\ for every pair of voters $u, v \in V$, there is an automorphism $\sigma \in \mathrm{aut}(\Phi)$ such that $\sigma(u) = v$. Intuitively, transitive-symmetric voting rules define ``equitable'' elections where each voter plays the same ``role'' in determining the final outcome. These objects were studied in detail by \cite{tamuz2019equitable}, who call them ``equitable voting rules.''
	
	For any voting rule $\Phi$ and a probability measure $q$ over $\{-1,1\}^n$, voter $i$'s {\em influence} on $\Phi$ is the probability that they are {\em pivotal} for a voting profile chosen from $q$. More formally, for any $x \in \{-1, 1\}^n$, let $x^{\oplus i} = (x_1, \ldots, x_{i-1}, -x_i, x_{i+1}, \ldots, x_n)$ be the voting profile obtained when only voter $i$ changes their vote from $x$. We say $i$ is {\em pivotal} for $x$ in $\Phi$ if $\Phi(x) \ne \Phi(x^{\oplus i})$, and the {\em influence} of $i$ on $\Phi$ with respect to probability measure $q$ on $\{-1, 1\}^n$ is
	\[ \I_i^p(\Phi) = \P[x \sim q]{\Phi(x) \ne \Phi(x^{\oplus i})} \]
	We say that a voting rule $\Phi$ is {\em unbiased} if $\I_1^q(\Phi) = \cdots = \I_n^q(\Phi)$ for every i.i.d.\ probability measure $q = p^{\otimes n}$ on $\{-1, 1\}^n$. It is easy to see that all transitive-symmetric voting rules are unbiased, but it is unclear whether the converse holds. Our main theorem shows that the two properties are distinct:\footnote{We only consider odd $n$ because it is a long-standing open question for which even $n$ there exist odd, monotone, transitive-symmetric voting rules \cite{isbell1960homogeneous}.}
	\begin{theorem}
		\label{thm:main}
		For all $n$ odd and large enough, there exist odd, monotone voting rules $\Phi : \{-1, 1\}^n \to \{-1, 1\}$ that are unbiased but not transitive-symmetric.
	\end{theorem}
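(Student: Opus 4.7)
The plan is to build $\Phi$ by perturbing a transitive-symmetric base rule (such as majority on $n$ odd) with carefully chosen local modifications that break transitivity of $\mathrm{aut}(\Phi)$ while keeping every voter's influence equal under every i.i.d.\ measure.

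The first step is a combinatorial reformulation. Writing $q = p^{\otimes n}$ with $p(1) = t$ and $a_k^i := |\{x \in \{-1,1\}^n : \mathrm{wt}(x) = k,\ \Phi(x) \ne \Phi(x^{\oplus i})\}|$, one has
\[
\I_i^q(\Phi) \;=\; \sum_{k=0}^{n} a_k^i \cdot t^k (1-t)^{n-k},
\]
so $\Phi$ is unbiased if and only if $a_k^i$ is independent of $i$ for every slice $k$. The task thus becomes: modify majority so that the per-slice pivotal-count vector $(a_k^1, \ldots, a_k^n)$ remains constant while the function loses invariance under any transitive permutation group.

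The basic modifications I would use are \emph{swap-pairs}: for a pair $(x, -x)$ with $x$ a maximal $+1$-profile of $\Phi$, set $\Phi(x) \mapsto -1$ and $\Phi(-x) \mapsto +1$. This preserves oddness and monotonicity, and shifts each $a_k^i$ by an explicit $\pm 1$ pattern determined by the coordinates of $x$. For the probabilistic construction one selects a random family $\mathcal{F}$ of swap-pairs subject to the linear balance condition that the aggregate change in every $a_k^i$ is zero --- an affine system with $O(n^2)$ constraints in a solution space of dimension exponential in $n$, hence nonempty and in fact very large for $n$ large enough. A union bound over $\sigma \in S_n$ then shows that a uniformly random balanced $\mathcal{F}$ has no nontrivial permutation stabilizing its support (hence no nontrivial automorphism of $\Phi$ can send some voter $i$ to a different voter $j$), yielding an unbiased rule whose automorphism group is not transitive.

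For the explicit construction I would exhibit a specific small family of swap-pairs whose per-slice contributions cancel exactly while the support pattern is visibly asymmetric across voters; a seed example at some small odd $n_0$ can then be extended to any larger odd $n$ by combining it with an auxiliary symmetric block of voters aggregated via majority, in a way that preserves both the per-slice balance and the asymmetry of the seed. The principal obstacle in either approach is the non-transitivity check: equal pivotal counts is a clean linear constraint, and oddness and monotonicity are preserved by design, but ruling out every permutation in $S_n$ as an automorphism requires either the probabilistic union bound above or, in the explicit case, identifying a permutation-invariant attached to one voter --- for instance, the multiset of pivotal profiles at a particular slice, not merely its cardinality --- that is not shared by another voter.
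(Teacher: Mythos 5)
Your route is genuinely different from the paper's: the paper perturbs majority by declaring the radius-$1$ balls of a (random or explicit) $d$-regular graph to be winning coalitions, whereas you perturb majority pointwise, flipping its value on antipodal pairs in the two middle slices of the cube. Your reduction is sound as far as it goes: the Bernstein-basis expansion shows unbiasedness is equivalent to the per-slice pivotal counts $a_k^i$ being independent of $i$ (this is the paper's Lemma \ref{lem:unbiased} in profile language rather than coalition language), and since the modified rule differs from majority exactly on $\mathcal{S} \cup (-\mathcal{S})$ with $\mathcal{S}$ contained in the weight-$(n+1)/2$ slice, one does get $\mathrm{aut}(\Phi) = \{\sigma : \sigma(\mathcal{S}) = \mathcal{S}\}$, so destroying the stabilizer of $\mathcal{S}$ destroys transitivity. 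However, two steps have real gaps.

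First, the claim that swap-pairs ``preserve oddness and monotonicity'' is true for a single pair but false for a family: if two swapped profiles $x, x' \in \mathcal{S}$ agree in exactly one coordinate, then $-x' < x$ coordinatewise while $\Phi(-x') = +1 > -1 = \Phi(x)$, violating monotonicity. These are exactly the configurations in which the per-swap changes to the $a_k^i$ also fail to be additive, so your ``aggregate change in every $a_k^i$ is zero'' bookkeeping silently assumes a disjointness condition on $\mathcal{S}$ that must be stated and verified (it holds w.h.p.\ for a sparse random family, so this is fixable). Second, the extension of a small explicit seed to general odd $n$ by adjoining ``an auxiliary symmetric block of voters aggregated via majority'' destroys unbiasedness: a seed voter's influence is its influence inside the seed multiplied by the probability that the seed's output is pivotal in the outer aggregation, while an auxiliary voter's influence is just its own outer pivotality, and these are different polynomials in $p$. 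The paper's composition (Lemma \ref{lem:compose}) avoids this by routing \emph{every} voter through an inner copy of the asymmetric rule before a transitive-symmetric outer rule, which is precisely why its explicit construction only reaches $n$ of the form $11(q^2+q+1)$; to obtain \emph{all} large odd $n$ your probabilistic branch must carry the theorem on its own, and there the assertion that the balance conditions form ``an affine system \ldots\ hence nonempty'' is not a proof over $0/1$ indicator vectors (a counting argument shows any balanced $\mathcal{S}$ must have size a multiple of $n$; such families do exist, but they need to be exhibited, and sampling uniformly from the balanced, interaction-free families so that the union bound over $S_n$ applies requires an additional argument).
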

	
	We prove Theorem \ref{thm:main} by introducing a class of {\em graphic} voting rules defined by an underlying graph (Section \ref{sec:prelim}). Informally, we associate transitive-symmetric voting rules to vertex-transitive graphs, and unbiased voting rules to regular graphs. This construction reduces our task to proving the existence of $d$-regular graphs that satisfy certain properties. In Section \ref{sec:prob}, we provide a probabilistic existence proof inspired by \cite{kim2002asymmetry} showing that the graphic voting rules of random regular graphs are unbiased and have trivial automorphism groups with probability $1 - o(1)$. Section \ref{sec:explicit} provides an explicit construction of a voting rule that is unbiased but not transitive-symmetric by composing a specific asymmetric graphic voting rule with a transitive-symmetric voting rule. However, this construction is only valid for specific $n$. Section \ref{sec:conclusion} concludes.
	
	\section{Preliminaries}
	\label{sec:prelim}
	
	\subsection{Graph Theory}
	Let $G = (V, E)$ be an unweighted, undirected graph with vertex set $V = [n]$ and edge set $E$. We intentionally overload notation because we will associate each vertex of a graph $G$ to a voter in a voting rule $\Phi_G$. For any two vertices $u, v \in V$, let $d(u, v)$ denote the length of a shortest path between $u$ and $v$ in $G$. The ball of radius $r$ about a vertex $v$ in $G$ is the set of all points a distance at most $r$ away from $v$, i.e.\ $B_r(v) = \{ u \in V : d(u, v) \le r \}$. Analogously, we also let $B_r(G) = \{ B_r(v) : v \in V \}$ denote the set of all balls of radius $r$ in $G$. The {\em diameter} of $G$ is $\mathrm{diam}(G) = \max_{u, v \in V} d(u, v)$, and the {\em radius} of $G$ is $\mathrm{rad}(G) = \lceil \mathrm{diam}(G) / 2 \rceil$. While this definition of the graph radius is non-standard, it has the attractive property that $\mathrm{rad}(G) = \min \{ r \in \R : B_r(u) \cap B_r(v) \ne \varnothing \,\, \forall u, v \in V \}$. 
	
	For a permutation $\sigma \in S_n$, let $\sigma(G) = (V, \{ \{\sigma(u), \sigma(v)\} : \{u, v\} \in E \})$ be the graph obtained by using $\sigma$ to permute the endpoints of every edge in $G$. If $\sigma(G) = G$, then we say that $\sigma$ is an {\em automorphism} of $G$, or $\sigma \in \mathrm{aut}(G)$. Analogously, if $B_r(\sigma(G)) = B_r(G)$, then we say that $\sigma$ is an {\em automorphism} of $B_r(G)$, or $\sigma \in \mathrm{aut}(B_r(G))$. Note that while $\mathrm{aut}(G) \subseteq \mathrm{aut}(B_r(G))$, it is unclear when the converse holds. Figure \ref{fig:regular} provides two examples where $\mathrm{aut}(G) = \mathrm{aut}(B_1(G))$.
	
	We say that a graph is {\em vertex-transitive} if for every pair of vertices $u, v \in V$, there is an automorphism $\sigma \in \mathrm{aut}(G)$ such that $\sigma(u) = v$. We say that $G$ is {\em asymmetric} if $\abs{\mathrm{aut}(G)} = 1$, i.e.\ the identity is the only automorphism of $G$. We define vertex-transitivity and asymmetry for $B_r(G)$ analogously. Finally, we call a graph $d$-regular if all of its vertices have degree $d$. It is easy to see that all vertex-transitive graphs must be regular, though the converse need not hold (Figure \ref{fig:regular}).
	
	\begin{figure}
		\centering
		\subfloat[This 3-regular graph is not vertex-transitive, though it still has nontrivial automorphisms. 
		\label{fig:reg_not_vtxtrans}]
		{
			\centering
			\includegraphics[width=0.45\textwidth]{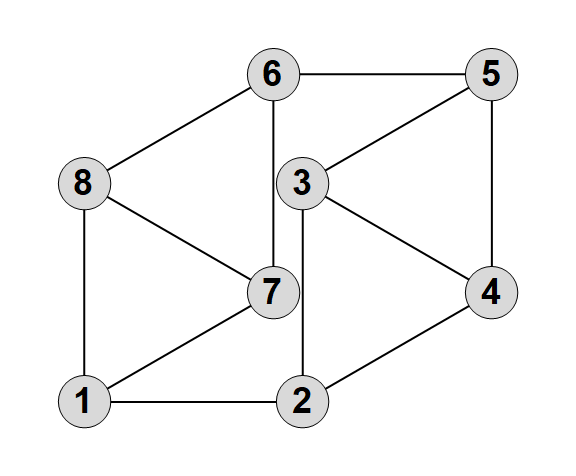}
		} \qquad
		\subfloat[This 4-regular graph is totally asymmetric, i.e.\ it has no non-trivial automorphisms.
		\label{fig:reg_asymmetric}]
		{
			\centering
			\ \ \includegraphics[width=0.42\textwidth]{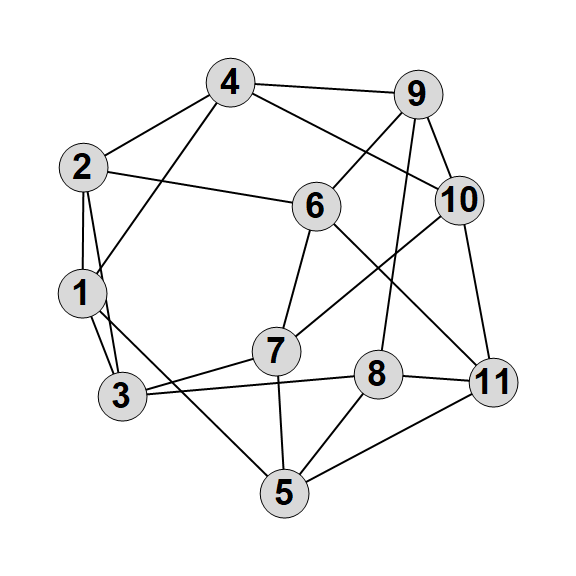} \ \ 
		}
		\caption{Regular graphs that are not vertex-transitive and have $\mathrm{aut}(G) = \mathrm{aut}(B_1(G))$. The graphic voting rule associated with the graph in Figure \ref{fig:reg_asymmetric} is unbiased but not transitive-symmetric.}
		\label{fig:regular}
	\end{figure}
	
	\subsection{Winning Coalitions of Voting Rules}
	A {\em winning coalition} for a voting rule $\Phi$ is a set of voters $S \subseteq [n]$ such that if a voting profile $x \in \{-1, 1\}^n$ has $x_i = y$ for all $i \in S$, then $\Phi(x) = y$. We say that a winning coalition $S$ is {\em minimal} if no strict subset $T \subset S$ of it is also a winning coalition. We note that $\Phi$ is odd and monotone if and only if the winning coalitions for both alternatives $-1$ and $1$ are equal, and any superset of a winning coalition is also a winning coalition. This observation gives rise to the following characterization of odd, monotone voting rules:
	\begin{definition}
		For any odd $n$, let $\mathcal{F} \subseteq 2^{[n]}$ be a family of sets such that $T \cap S \ne \varnothing$ and $S \not \subset T$ for all $S, T \in \mathcal{F}$. Define the voting rule $\Phi_{\mathcal{F}} : \{-1, 1\}^n \to \{-1, 1\}$ as
		\[ \Phi_{\mathcal{F}}(x) = 
		\begin{cases}
		y & \exists S \in \mathcal{F} \,\, \mathrm{s.t.} \,\, x_i = y \,\, \forall i \in S \\ 
		\mathrm{maj}_n(x) & \mathrm{otherwise}
		\end{cases} \]
	\end{definition}
	$\Phi_{\mathcal{F}}$ is well-defined only if $\mathcal{F}$ is an intersecting family of sets. This is because every $S \in \mathcal{F}$ is a minimal winning coalition for $\Phi_{\mathcal{F}}$ (minimality follows because $S \not \subset T$ for any $S, T \in \mathcal{F}$). We call a voter $i$ {\em pivotal} for a winning coalition $S$ if $S \setminus \{i\}$ is not a winning coalition. This re-framing of voting rules in terms of winning coalitions gives us the following characterization of odd, monotone, unbiased voting rules:
	
	\begin{lemma}
		\label{lem:unbiased}
		Let $\Phi : \{-1, 1\}^n \to \{-1, 1\}$ be an odd, monotone voting rule with minimal winning coalitions $\mathcal{F}$. $\Phi$ is unbiased if and only if every voter $i$ is pivotal for an equal number of winning coalitions of each size. 
	\end{lemma}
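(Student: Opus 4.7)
The plan is to parametrize the i.i.d.\ probability measure as $q = p^{\otimes n}$ with $p \in [0,1]$ being the probability of a $+1$ vote, and then express the influence $\I_i^q(\Phi)$ as a polynomial in $p$ whose coefficients encode precisely the counts appearing in the statement. Once this polynomial expression is in hand, the lemma reduces to a linear independence argument.

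First I would unpack the pivotal condition in terms of winning coalitions. For a profile $x$, let $S(x) = \{j : x_j = +1\}$. Using oddness and monotonicity, voter $i$ is pivotal for $x$ if and only if the set $T = S(x) \cup \{i\}$ is a winning coalition and $T \setminus \{i\}$ is not; moreover, given such a ``pivotal'' pair $(T, i)$, exactly two profiles (differing only in $x_i$) witness it. Writing $a_{i,k}$ for the number of winning coalitions $T$ of size $k$ with $i \in T$ and $T \setminus \{i\}$ losing, I obtain
\begin{equation*}
\I_i^q(\Phi) = \sum_{k=1}^n a_{i,k} \bigl(p^{k}(1-p)^{n-k} + p^{k-1}(1-p)^{n-k+1}\bigr) = \sum_{k=1}^n a_{i,k}\, p^{k-1}(1-p)^{n-k},
\end{equation*}
after collecting the $p + (1-p) = 1$ factor. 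Thus $\I_i^q(\Phi)$ is a polynomial in $p$ of degree at most $n-1$ with coefficients determined by the pivotal counts $a_{i,k}$.

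Next I would observe that $\Phi$ is unbiased precisely when $\I_i^q(\Phi) = \I_j^q(\Phi)$ as functions of $p$ for all $i,j$, which is the same as saying the polynomials agree for every $p \in [0,1]$. Since the family $\{p^{k-1}(1-p)^{n-k}\}_{k=1}^n$ is (up to binomial factors) the Bernstein basis of degree $n-1$, it is linearly independent over $\R$. Therefore equality of the polynomials forces $a_{i,k} = a_{j,k}$ for every $k$, and conversely any such equality of counts forces equality of the polynomials. Translating back, $a_{i,k}$ is precisely the number of winning coalitions of size $k$ for which voter $i$ is pivotal, so this is exactly the statement of the lemma.

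The only step that requires any care is the bookkeeping that turns ``pivotal profile'' into ``pivotal winning coalition,'' in particular the factor-of-two collapse noted above; everything else is either a standard linear-independence fact about the Bernstein basis or a direct manipulation of the definition of influence. I expect this collapse to be the only place a careful reader might want extra detail.
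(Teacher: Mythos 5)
Your proof is correct and takes essentially the same route as the paper: write $\I_i^q(\Phi)$ as a sum of pivotal-coalition counts times fixed polynomials in $p$, then conclude by linear independence. If anything your bookkeeping is cleaner --- counting each pivotal profile once and collapsing $p+(1-p)=1$ lands you on the genuine Bernstein basis $\{p^{k-1}(1-p)^{n-k}\}_{k=1}^n$, whereas the paper's four-term coefficients satisfy $c_j = c_{n-j+1}$ and so are not linearly independent by themselves (its concluding ``if and only if'' implicitly relies on each voter's pivotal counts being symmetric under $j \mapsto n-j+1$).
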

	
	\begin{proof}
		Let $\mathcal{F}_i = \{ T : \exists ! S \in \mathcal{F} \mbox{ s.t. } i \in S \mbox{ and } S \subseteq T \}$ be the set of winning coalitions of $\Phi$ for which voter $i$ is pivotal, and let $\mathcal{F}_{i,j} = \{ S \in \mathcal{F}_i : \abs{S} = j \}$ be the subset of these coalitions with size $j$. If we let $m_{i,j} = \abs{\mathcal{F}_{i,j}}$, then for any probability measure $p$ on $\{-1, 1\}$, voter $i$ has influence
		\begin{align*}
		\I^{p^{\otimes n}}_i(\Phi) &= \sum_{S \in \mathcal{F}_i} \P[x \sim q^{\otimes n}]{ (x_j = y \,\, \forall j \in S \setminus \{i\}) \land (x_j = -y \,\, \forall j \notin S \setminus \{i\}) } \\
		&\quad + \sum_{S \in \mathcal{F}_i} \P[x \sim q^{\otimes n}]{(x_j = y \,\, \forall j \in S) \land (x_j = -y \,\, \forall j \notin S)} \\	&= \sum_{j = 1}^{n} m_{i,j} c_{j}(p)
		\end{align*}
		where, with the slight abuse of notation  $\P[x \sim p]{x = 1} = p$, the coefficients are $c_j(p) = p^{j-1} (1 - p)^{n-j+1} + p^{n-j+1} (1 - p)^{j-1} + p^j (1 - p)^{n-j} + p^{n-j} (1 - p)^j$. Because $\Phi$ is unbiased, we know that $\sum_{j = 1}^{n} m_{1,j} c_j(p) = \cdots = \sum_{j = 1}^{n} m_{n,j} c_j(p)$ for all $p$. This is possible if and only if $m_{1,j} = \cdots = m_{n,j}$ for each $j \in [n]$.
		
	\end{proof}
	
	We can also lower bound the size of the smallest winning coalition of any odd, monotone, unbiased voting rule:
	\begin{lemma}
		\label{lem:lowerbound}
		Let $\Phi : \{-1, 1\}^n \to \{-1, 1\}$ be an odd, monotone, unbiased voting rule with minimal winning coalitions $\mathcal{F}$. Then, $\abs{S} \ge \lceil \sqrt{n} \rceil$ for all $S \in \mathcal{F}$. 
		
	\end{lemma}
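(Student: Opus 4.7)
The plan is to set $k = \min_{S \in \mathcal{F}} |S|$ and show $n \leq k^2$ via a Fisher-type counting argument applied to $\mathcal{F}_k := \{S \in \mathcal{F} : |S| = k\}$. Because $k$ is a positive integer, $n \leq k^2$ will yield $k \geq \lceil \sqrt{n} \rceil$, and hence the bound for every $S \in \mathcal{F}$.

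First I would translate pivotality at size $k$ into a vertex degree. Any winning coalition of size $k$ itself lies in $\mathcal{F}_k$, since it must contain some $S \in \mathcal{F}$ and $|S| \geq k$ forces them to coincide. Consequently, for $T \in \mathcal{F}_k$ and a voter $i$, $i$ is pivotal for $T$ exactly when $i \in T$, and in that case $T$ is the unique minimal winning coalition contained in $T$ through $i$. This gives $m_{i,k} = d_i := |\{T \in \mathcal{F}_k : i \in T\}|$, so Lemma \ref{lem:unbiased} specialized to $j = k$ forces $d_1 = \cdots = d_n =: d$, with $d \geq 1$ because $\mathcal{F}_k$ is nonempty and hence covers some vertex and (by the uniform degree) every vertex. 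A double count then yields $|\mathcal{F}_k| = nd/k$.

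Next I would feed in the intersecting property: fix any $S \in \mathcal{F}_k$; every other member of $\mathcal{F}_k$ meets $S$, while each vertex of $S$ lies in exactly $d-1$ other sets of $\mathcal{F}_k$, so $|\mathcal{F}_k| - 1 \leq k(d-1)$. Substituting $|\mathcal{F}_k| = nd/k$ and clearing denominators rearranges to $d(n - k^2) \leq -k(k-1)$. For $k \geq 2$ the right side is strictly negative and $d \geq 1$, forcing $n < k^2$ and hence $k \geq \lceil \sqrt{n} \rceil$; the degenerate case $k = 1$ would make some singleton a dictator, which is ruled out by unbiasedness whenever $n \geq 2$. The only delicate step — not really an obstacle — is verifying that pivotality at size $k$ reduces cleanly to membership in a set of $\mathcal{F}_k$, which is exactly where the minimality of $k$ is used. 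The inequality extracted is a classical tight bound for intersecting regular uniform set systems, achieved by projective planes with $n = q^2 + q + 1$ and $k = d = q+1$, which suggests that the lemma's bound is essentially sharp.
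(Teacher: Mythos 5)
Your proof is correct and follows essentially the same route as the paper's: restrict to the family of minimum winning coalitions, use Lemma~\ref{lem:unbiased} at size $k$ to conclude each voter lies in the same number $d$ of them, double-count to get $\abs{\mathcal{F}_k}=nd/k$, and bound $\abs{\mathcal{F}_k}$ via the intersecting property by counting incidences with a fixed member $S$. The only (harmless) difference is that you exclude $S$ itself from the incidence count, yielding the slightly sharper Fisher-type inequality $\abs{\mathcal{F}_k}-1\le k(d-1)$ and hence strict $n<k^2$, where the paper settles for $m\le mk^2/n$ and $n\le k^2$.
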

	
	\begin{proof}
		Let $k = \min\{ \abs{S} : S \in \mathcal{F} \}$ be the size of a smallest winning coalition of $\Phi$ (call it a {\em minimum} winning coalition) and let $\mathcal{F}^* = \bigcup_{i = 1}^{n} \mathcal{F}^*_i$ be the set of all minimum winning coalitions. Let $m_i = \abs{ \{ S \in \mathcal{F}^* : i \in S \} }$ and $m = \abs{\mathcal{F}^*}$. Since $\Phi$ is unbiased and every participating voter is pivotal for a minimum winning coalition, $m_1 = \cdots = m_n = m k / n$ by Lemma \ref{lem:unbiased} and the fact that $\sum_{i = 1}^{n} m_i = m k$. 
		
		Therefore, for any $S \in \mathcal{F}^*$, $\sum_{i \in S} \sum_{T \in \mathcal{F}} \one[i \in T] = \sum_{i \in S} m_i = m k^2 / n$. This implies that the elements of $S$ collectively appear in at most $m k^2 / n$ different $T \in \mathcal{F}^*$. Finally, because $\mathcal{F}^* \subseteq \mathcal{F}$ is an intersecting family of sets with $\abs{\mathcal{F}^*} = m$, we can conclude that $m \le mk^2 / n \implies k \ge \sqrt{n}$. Since $k \in \Z$, this implies that $k \ge \lceil \sqrt{n} \rceil$.
		
	\end{proof}
	
	It is worth noting that \cite{tamuz2019equitable} proved that $\abs{S} \ge \lceil \sqrt{n} \rceil$ for all $S \in \mathcal{F}$ when $\Phi$ is transitive-symmetric (Theorem 2), and moreover, there exist transitive-symmetric $\Phi$ for which this lower bound is tight (Theorem 6). Our Lemma \ref{lem:lowerbound} has just shown that the lower bound even holds under the weaker condition $\Phi$ is unbiased. Our Theorem \ref{thm:explicit} proves that this lower bound is tight up to a constant factor if $\Phi$ is unbiased but not transitive-symmetric, but it is unclear whether the bound is achieved exactly by any voting rules that are unbiased but not transitive-symmetric.
	
	\subsection{Graphic Voting Rules}
	
	We consider voting rules that equate the set of voters with the vertices of an underlying graph $G$. To avoid treating ties in a voting model that only admits strict preferences, we assume $\abs{V} = n$ is odd. The {\em graphic voting rule} associated to a graph $G$ of radius $r$ is just $\Phi_G = \Phi_{B_r(G)}$, i.e.\
	\begin{align*}
	\Phi_G(x) = \Phi_{B_r(G)}(x) =
	\begin{cases}
	y & \exists v \in V \,\, \mathrm{s.t.} \,\, x_i = y \,\, \forall i \in B_r(v) \\
	\mathrm{maj}_n(x) & \mathrm{otherwise}
	\end{cases}
	\end{align*}
	
	We make a few remarks about this definition, some of which will motivate us to impose additional assumptions on the graph $G$ underlying the voting rule $\Phi_G$.
	\begin{enumerate}
		\item $\Phi_G$ is a valid voting rule that is odd and monotone because $B_r(G)$ is an intersecting family of sets by our definition of the graph radius.
		
		\item Because $\Phi_G$ is uniquely determined by $G$'s balls of radius $r$, we know that $\mathrm{aut}(\Phi_G) \subseteq \mathrm{aut}(B_r(G))$. So if $B_r(G)$ is not vertex-transitive, then $\Phi_G$ is not transitive-symmetric.
		
		\item We can assume that $\mathrm{diam}(G) = 2$ (and therefore $\mathrm{rad}(G) = 1$) without loss of generality: if $\mathrm{rad}(G) = r$, let $G' = \qty(V, \{ \{u, v\} : u \in V, v \in B_r(u) \})$ be the graph obtained by connecting all the vertices of $G$ in balls of radius $r$. It is easy to see that $\mathrm{rad}(G') = 1$ and $\Phi_G = \Phi_{G'}$. We ignore the trivial case where where $\mathrm{diam}(G') = 1$ and $G'$ is just the complete graph.
		
		\item We further assume that $G$ is $d$-regular and has all balls of radius 1 distinct. Then, every voter in $\Phi_G$ participates in exactly $d + 1$ minimum winning coalitions, each of size $d + 1$.
		
		\item To ensure that $\Phi_G \ne \mathrm{maj}_n$, we further assume that $G$ is $d$-regular with $d \le (n - 3) / 2$.
		
	\end{enumerate}
	
	Theorem \ref{thm:graph_exist} shows that 3 - 5 are reasonable assumptions, even in conjunction. We now provide an explicit example of an odd, monotone voting rule on $n = 11$ voters that is unbiased but not transitive-symmetric:
	\begin{theorem}
		\label{thm:example}
		Let $G$ be the graph depicted in Figure \ref{fig:reg_asymmetric}. Then $\Phi_G$ is an odd, monotone, unbiased voting rule with $\abs{\mathrm{aut}(\Phi_G)} = 1$, i.e.\ $\Phi_G$ is totally asymmetric.
	\end{theorem}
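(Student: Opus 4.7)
The plan is to deduce the four required properties of $\Phi_G$ directly from three structural features of the depicted graph $G$: it is $4$-regular on $n = 11$ vertices, has diameter $2$ (so $\mathrm{rad}(G) = 1$), and satisfies $\mathrm{aut}(G) = \mathrm{aut}(B_1(G)) = \{\mathrm{id}\}$, as asserted in the captions of Figures~\ref{fig:regular} and \ref{fig:reg_asymmetric}. Oddness and monotonicity are immediate from $\mathrm{rad}(G) = 1$, which makes $B_1(G)$ an intersecting family of sets. Total asymmetry follows because $\Phi_G$ is determined by $B_1(G)$, giving $\mathrm{aut}(\Phi_G) \subseteq \mathrm{aut}(B_1(G)) = \{\mathrm{id}\}$.

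The main work is showing $\Phi_G$ is unbiased via Lemma~\ref{lem:unbiased}, which reduces to checking that every voter is pivotal for equally many winning coalitions of each size. I would first classify winning coalitions by size. Since the minimal winning coalitions are the balls $B_1(v)$ of size $d+1 = 5$, no $T$ of size $\le 4$ is winning; a size-$5$ set $T$ is winning iff $T \in B_1(G)$; a size-$6$ set $T$ is winning iff $V \setminus T \notin B_1(G)$ (otherwise the adversary can make the complementary ball all vote $-y$, and the intersecting-family condition rules out the ambiguous case of $T$ also containing a ball); and every $T$ of size $\ge 7$ is winning, since its complement is too small to contain a ball while majority dominates. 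Consequently voter $i$ can only be pivotal for coalitions of size $5$, $6$, or $7$, as removing any voter from a coalition of size $\ge 8$ leaves a winning coalition of size $\ge 7$.

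At sizes $5$ and $7$ the pivotal count for voter $i$ equals the number of balls containing $i$, which is $d + 1 = 5$ by regularity, independent of $i$. At size $6$, voter $i \in T$ is pivotal for winning $T$ iff $T \setminus \{i\} \notin B_1(G)$. Define $A = \{T : |T| = 6,\, i \in T,\, V \setminus T \in B_1(G)\}$ and $B = \{T : |T| = 6,\, i \in T,\, T \setminus \{i\} \in B_1(G)\}$. Each of $A$ and $B$ is in bijection with the set of balls not containing $i$, so $|A| = |B| = n - (d+1) = 6$. Moreover $A \cap B = \varnothing$: if $V \setminus T = B_1(w)$ and $T \setminus \{i\} = B_1(v)$ held simultaneously, then $V = B_1(v) \sqcup \{i\} \sqcup B_1(w)$ would force $d(v, w) \ge 3$, contradicting $\mathrm{diam}(G) = 2$. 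Inclusion-exclusion on the $\binom{10}{5} = 252$ size-$6$ sets containing $i$ therefore gives $252 - |A| - |B| + |A \cap B| = 240$ pivotal winning size-$6$ coalitions, again independent of $i$. Lemma~\ref{lem:unbiased} then yields unbiasedness.

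The main obstacle I anticipate is rigorously verifying $\mathrm{aut}(G) = \mathrm{aut}(B_1(G)) = \{\mathrm{id}\}$ for the specific graph depicted, which, although believable from the figure, is not self-evident. A clean argument would exhibit vertex invariants (e.g.\ degree sequences of the subgraphs induced on closed neighborhoods, or structural features of $B_1$-membership lists) that distinguish every vertex of $G$, forcing any automorphism of $G$ or of $B_1(G)$ to be the identity.
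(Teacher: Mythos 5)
Your proposal is correct and follows essentially the same route as the paper: total asymmetry via $\mathrm{aut}(\Phi_G) \subseteq \mathrm{aut}(B_1(G)) = \{\mathrm{id}\}$ (which the paper verifies by exhaustive computation, exactly the step you flag as the remaining obstacle), and unbiasedness via Lemma~\ref{lem:unbiased} together with $4$-regularity. Your explicit per-size pivotal counts at sizes $5$, $6$, and $7$ --- in particular the $252 - 6 - 6 + 0 = 240$ inclusion--exclusion at size $6$ and the disjointness of $A$ and $B$ from the intersecting-family property --- rigorously fill in a step the paper dispatches with the single sentence that the balls are the only winning coalitions on which $\Phi_G$ differs from majority rule.
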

	
	\begin{proof}
		$G$ is a 4-regular graph on 11 vertices with $\mathrm{diam}(G) = 2$ and all balls of radius 1 distinct. One can also check through exhaustive computation that $\mathrm{aut}(B_1(G))$ is trivial. Therefore, $\Phi_G : \{-1, 1\}^{11} \to \{-1, 1\}$ is such that $\mathrm{aut}(\Phi_G) \subseteq \mathrm{aut}(B_1(G)) = \{\mathrm{id}\}$, and each of the 11 voters participates in exactly 5 minimum winning coalitions of size 5. Because these are the only winning coalitions in which $\Phi_G$ differs from majority rule, every voter is pivotal to an equal number of winning coalitions of each size. By Lemma \ref{lem:unbiased}, this implies that $\Phi_G$ is unbiased.
		
	\end{proof}

	\section{A Probabilistic Construction}
	\label{sec:prob}
	Here, we use the probabilistic method to prove the existence of a class of regular graphs whose associated graphic voting rules are totally asymmetric, unbiased, and have winning coalitions of size $\lfloor 0.293n \rfloor$. This result immediately implies our main Theorem \ref{thm:main}. More specifically, let $\mathcal{G}_{n, d}$ be the uniform distribution over $d$-regular graphs on $n$ vertices, where $nd$ is even. For our purposes, this means that $n$ is odd and $d$ is even. Theorem \ref{thm:graph_exist} shows that probability of sampling a graph of interest from $\mathcal{G}_{n, d}$ is $1 - o(1)$, as long as the degree $d$ is large enough:
	
	\begin{theorem}
		\label{thm:graph_exist}
		Let $G \sim \mathcal{G}_{n, d}$ with $\sqrt{n} \log n \le d \le n / 2$. Then, with probability $1 - o(1)$, $\abs{\mathrm{aut}(B_1(G))} = 1$ and $\abs{B_1(u) \cap B_1(v)} = (1 + o(1)) d^2 / n$ for every pair of vertices $u \ne v$. 
	\end{theorem}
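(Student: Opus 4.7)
The plan is to establish the two conclusions — intersection concentration and triviality of $\mathrm{aut}(B_1(G))$ — by working in the configuration model (the uniform matching of $nd$ half-edges) and transferring the results to $\mathcal{G}_{n,d}$ via standard contiguity and switching arguments for random regular graphs. I would prove concentration first and then feed it into the asymmetry argument.

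\textbf{Concentration.} For any $u \ne v$ I write $\abs{B_1(u) \cap B_1(v)} = \abs{N(u) \cap N(v)} + 2 \cdot \one[u \sim v]$. In the configuration model, the probability that a given $w \notin \{u,v\}$ is a common neighbor of $u$ and $v$ is $(1+o(1)) d^2/n^2$, so $\E{\abs{N(u) \cap N(v)}} = (1+o(1)) d^2/n$. Since $\abs{N(u) \cap N(v)}$ is a Lipschitz functional of a bounded number of edge-switches, an Azuma/Chernoff-type inequality yields sub-Gaussian concentration with standard deviation $O(d/\sqrt n)$. Using $d \ge \sqrt n \log n$, the single-pair failure probability is $\exp(-\Omega(\log^2 n))$, comfortably surviving the $\binom{n}{2}$ union bound over all pairs. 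A corollary is that all balls are distinct w.h.p., since $(1+o(1))d^2/n < d+1$ whenever $d \le n/2$.

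\textbf{Triviality of $\mathrm{aut}(B_1(G))$.} I would prove $\P{\mathrm{aut}(B_1(G)) \ne \{\mathrm{id}\}} = o(1)$ by a union bound over non-identity $\sigma \in S_n$, stratified by support size $s = \abs{\{v : \sigma(v) \ne v\}} \ge 2$. Distinctness of balls (from the previous step) guarantees that any $\sigma \in \mathrm{aut}(B_1(G))$ induces a well-defined permutation $\pi$ of $V$ with $\sigma(B_1(v)) = B_1(\pi(v))$, and this condition imposes $\Theta(sd)$ edge-coincidence constraints on the random matching. Following the Kim--Sudakov--Vu strategy, the probability that a fixed $\sigma$ of support $s$ satisfies these constraints decays roughly like $\exp(-c \cdot s \cdot d^2/n)$ — leveraging again the concentration from the first step — while the number of such $\sigma$ is at most $n^s$. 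Since $d^2/n \ge \log^2 n$, the ratio crushes the union bound summed over $s \ge 2$.

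\textbf{Main obstacle.} The most delicate case is small support, especially $s = 2$: a transposition $(uv) \in \mathrm{aut}(B_1(G))$ forces $u, v$ to be ``twin-like'' (adjacent, with nearly-identical external neighborhoods), and this is ruled out by the concentration of the first step, since the expected overlap $d^2/n$ is much less than the degree $d$. Extending this to all non-identity permutations and handling the interpolation between the sparse end ($d \approx \sqrt n \log n$) and the dense end ($d \approx n/2$) uniformly is the technical heart of the proof. The intermediate support regimes require a cycle-structure analysis analogous to but more delicate than the Kim--Sudakov--Vu graph-automorphism proof, since the constraint ``$\sigma$ preserves the family $B_1(G)$'' is in principle weaker than ``$\sigma$ preserves the edge set of $G$''.
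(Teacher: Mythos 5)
Your overall architecture---co-degree concentration plus a union bound over permutations stratified by support, in the Kim--Sudakov--Vu style---matches the paper's strategy in outline, but there is a genuine gap in your model-transfer step, which is exactly the part the paper's proof is engineered around. You propose to work in the configuration model and pass to $\mathcal{G}_{n,d}$ ``via standard contiguity and switching arguments.'' Contiguity between the configuration model and the uniform $d$-regular graph is standard only for bounded or very slowly growing $d$; here $d$ ranges up to $n/2$, where the configuration model's probability of producing a simple graph already decays like $\exp(-\Theta(d^2))$ for $d = o(\sqrt{n})$ and the model ceases to be a usable proxy for dense $d$. Conditioning on simplicity (or on regularity) can therefore inflate a failure probability by a super-exponential factor, and your claimed per-permutation rate $\exp(-c\, s\, d^2/n)$---which at the sparse end $d = \sqrt{n}\log n$ is only $\exp(-c\,s\log^2 n)$---cannot absorb any such correction. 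The paper confronts this head-on: it proves the defect bound in $G(n,p)$ with failure probability $\exp(-\Omega(d^{2-3\delta}))$ (Theorem \ref{thm:gnp_asymmetry}, via the Alon--Kim--Spencer inequality, Lemma \ref{lem:large_dev}), transfers to $\mathcal{G}_{n,d}$ by dividing by $\mathbb{P}[G(n,d/n) \text{ is $d$-regular}] \ge \exp(-nd^{1/2+\delta})$ (Lemma \ref{lem:gnp_regular}), which only closes for $d \ge n^{3/4}$, and handles $\omega(\log n) \le d \le n^{3/4}$ by a separate argument combining the co-degree bound (Lemma \ref{lem:codegree}) with an edge-span lemma (Lemma \ref{lem:set_edge_span}). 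Your proposal gives no mechanism that survives this transfer, and the rate $\exp(-c\,s\,d^2/n)$ is asserted rather than derived.

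Two further issues. First, $\sigma \in \mathrm{aut}(B_1(G))$ means there exists \emph{some} $\pi$ with $\sigma(B_1(v)) = B_1(\pi(v))$ for all $v$; since $\pi$ depends on the random graph, one cannot compute ``the probability that a fixed $\sigma$ satisfies the constraints'' without also enumerating $\pi$. The paper union-bounds over pairs $(\sigma,\pi)$, paying $n^{2k}$ rather than $n^k$, and stratifies by the joint support $U = \{v : \sigma(v)\ne v \text{ or } \pi(v)\ne v\}$; stratifying by the support of $\sigma$ alone does not control the entropy of $\pi$. Second, your concentration step for $\abs{B_1(u)\cap B_1(v)}$ is itself the content of the cited Krivelevich et al.\ co-degree result (Lemma \ref{lem:codegree}); an ``Azuma over switchings'' bound for dense random regular graphs is not off-the-shelf, and the paper quotes that result rather than reproving it. On the positive side, your treatment of the small-support case (a transposition forcing two vertices to be near-twins, ruled out because the typical overlap $(1+o(1))d^2/n$ is far below $d$) is the right idea and is essentially the paper's Inequality \ref{eqn:defect} argument.
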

	
	When taken together with the following result, our main Theorem \ref{thm:main} follows:
	
	\begin{theorem}
		\label{thm:voting_exist}
		Let $G$ be a $d$-regular graph with $\lfloor 0.293n \rfloor - 1 \le d \le (n - 3) / 2$, $\abs{\mathrm{aut}(B_1(G))} = 1$, and $\abs{B_1(u) \cap B_1(v)} = (1 + o(1)) d^2 / n$ for every pair of vertices $u \ne v$. If $n$ is odd, then $\Phi_G$ is an odd, monotone, unbiased voting rule with $\abs{\mathrm{aut}(\Phi_G)} = 1$. Moreover, $\Phi_G$ has winning coalitions of size $d + 1$.
		
	\end{theorem}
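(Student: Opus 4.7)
My plan is to verify the claimed properties in order. Since $|B_1(u)\cap B_1(v)|=(1+o(1))d^2/n$ is positive for $n$ large, $\mathrm{rad}(G)=1$ and $B_1(G)$ is an intersecting family of equally-sized distinct sets; remarks 1 and 2 of Section \ref{sec:prelim} then yield that $\Phi_G=\Phi_{B_1(G)}$ is odd and monotone, and satisfies $\mathrm{aut}(\Phi_G)\subseteq\mathrm{aut}(B_1(G))=\{\mathrm{id}\}$. Each ball $B_1(v)$ is a winning coalition of size $d+1$, so the last claim is also immediate.

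For unbiasedness I would invoke Lemma \ref{lem:unbiased} and compare $\Phi_G$ with the transitive-symmetric rule $\mathrm{maj}_n$, for which the pivotal counts $m_{i,j}^{\mathrm{maj}_n}$ are automatically constant in $i$. Write $A(T)=\one[T\supseteq B_1(v)\text{ for some }v]$, $B(T)=A(T^c)$, and $M(T)=\one[|T|\ge(n+1)/2]$, and note that $A(T)\Rightarrow\lnot B(T)$: otherwise $T$ and $T^c$ each contain a ball and those two balls are disjoint, contradicting $\mathrm{rad}(G)=1$. A short manipulation of the definition of $\Phi_G$ then yields the decomposition
\[ \chi^{\Phi_G}(T)-\chi^{\mathrm{maj}_n}(T) \;=\; \xi(T)-\xi(T^c),\qquad \xi(T)\;:=\;A(T)\cdot\one[|T|\le(n-1)/2]. \]
Summing the monotone difference $\chi^{\Phi_G}(T)-\chi^{\Phi_G}(T\setminus\{i\})$ over $T\ni i$ with $|T|=j$ and reindexing the $T^c$ contribution via the bijection $S=T^c\cup\{i\}$ gives
\[ m_{i,j}^{\Phi_G} \;=\; m_{i,j}^{\mathrm{maj}_n}+\widetilde N_i(j)+\widetilde N_i(n-j+1),\qquad \widetilde N_i(k)\;:=\sum_{|T|=k,\,i\in T}\!\!\bigl[\xi(T)-\xi(T\setminus\{i\})\bigr]. \]
It therefore suffices to show $\widetilde N_i(k)$ is independent of $i$ for every $k$.

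The heart of the proof, and the reason the constant $0.293$ appears in the hypothesis, is the geometric claim that for $n$ large no subset of $[n]$ of size at most $(n-1)/2$ contains two distinct balls. Since $|B_1(u)\cup B_1(v)|=2(d+1)-(1+o(1))d^2/n$, this reduces to the inequality $2c-c^2>1/2$ with $c=d/n$, which first holds at $c=1-1/\sqrt{2}\approx 0.2929$; the assumption $d\ge\lfloor 0.293n\rfloor-1$ places $c$ strictly above this threshold for $n$ large enough to absorb the $o(1)$ error. Given this, $\xi(T)-\xi(T\setminus\{i\})$ equals $1$ precisely when $T$ contains a (necessarily unique) ball $B_1(v)$ with $i\in B_1(v)$, and a direct binomial count gives
\[ \widetilde N_i(j)\;=\;(d+1)\binom{n-d-1}{j-d-1}\qquad\text{for } d+1\le j\le(n-1)/2, \]
while the symmetric count over $v\notin B_1(i)$ at $j=(n+1)/2$ yields $\widetilde N_i((n+1)/2)=-(n-d-1)\binom{n-d-2}{(n-2d-3)/2}$, and $\widetilde N_i(j)=0$ for all other $j$. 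Each expression depends only on $(n,d,j)$, which gives the desired independence of $i$. The only real obstacle is the geometric claim above; the algebraic decomposition and ensuing binomial bookkeeping are routine once it is in hand.
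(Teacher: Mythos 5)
Your proof is correct and follows essentially the same route as the paper's: both arguments hinge on the single geometric fact that when $2(d+1) - \max_{u\ne v}\abs{B_1(u)\cap B_1(v)} \ge (n+1)/2$ (equivalently $d \ge (1-1/\sqrt{2}+o(1))n$, whence the constant $0.293$), no set of size at most $(n-1)/2$ contains two distinct balls, so every voter's pivotality count at each coalition size depends only on $n$, $d$, and the size, and Lemma \ref{lem:unbiased} applies. The paper packages this by partitioning the sub-majority winning coalitions into families $\mathcal{F}_v = \{S \supseteq B_1(v)\}$ indexed by balls, whereas you subtract off majority and write out the binomial counts explicitly; the mathematical content is the same.
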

	
	\begin{proof}
		Because $0 < \abs{B_1(u) \cap B_1(v)} = (1 + o(1)) d^2 / n < d + 1$ for all $u \ne v \in V$, we know that $\mathrm{rad}(G) = 1$ and all balls of radius 1 are distinct. Therefore, $\mathrm{aut}(\Phi_G) \subseteq \mathrm{aut}(B_1(G)) = \{\mathrm{id}\}$, and each voter participates in exactly $d + 1$ minimum winning coalitions of size $d + 1$. Now, we prove that $\Phi_G$ is unbiased.
		
		Let $k = \max_{u \ne v} \abs{B_1(u) \cap B_1(v)} = (1 + o(1)) d^2 / n$ be the maximal intersection between any two minimum winning coalitions of $\Phi_G$, and consider the following families of winning coalitions for each $v \in V$:
		\[ \mathcal{F}_v = \{ S \supseteq B_1(v) : \abs{S} < \min \{ 2(d+1) - k, (n+1)/2 \} \} \]
		Because $\abs{S} < 2(d + 1) - k \le \abs{B_1(u) \cup B_1(v)}$ for all $S \in \mathcal{F}$ and $u \ne v \in V$, we know that $\mathcal{F}_u \cap \mathcal{F}_v = \varnothing$ and $\abs{\{S \in \mathcal{F}_u : \abs{S} = t\}} = \abs{\{S \in \mathcal{F}_v : \abs{S} = t\}}$ for all $t$. Moreover, if $S \in \mathcal{F}_v$, then $S \setminus \{u\}$ is not a winning coalition for any $u \in B_1(v)$ (since no $S \in \mathcal{F}_v$ is a majority). In words, every voter in $B_1(v)$ is pivotal to every winning coalition in $\mathcal{F}_v$. 
		
		Now assume that $2 (d + 1) - k \ge (n + 1) / 2$. Then, $\bigcup_{v \in V} \mathcal{F}_v$ is exactly the set of all winning coalitions smaller than a majority. Because every voter $v \in V$ participates in exactly $d + 1$ minimum winning coalitions $B_1(u)$ of size $d + 1$, the analysis above implies that every voter is pivotal for an equal number of winning coalitions of each size. By Lemma \ref{lem:unbiased}, this implies that $\Phi_G$ is unbiased.
		
		Finally, we translate the assumption that $2 (d + 1) - k \ge (n + 1) / 2$ into an assumption about $d$. Since, $k = (1 + o(1)) d^2 / n$, we solve the quadratic inequality $2 (d + 1) - (1 + o(1)) d^2 / n \ge (n + 1) / 2$ to obtain the condition that $d \ge (1 - 1/\sqrt{2} + o(1)) n$. For $n$ large enough, $d \ge \lfloor 0.293n \rfloor - 1$ will suffice.
		
	\end{proof}
	
	The remainder of this section proves Theorem \ref{thm:graph_exist} as a concentration inequality about the {\em defect} of a random regular graph. Intuitively, the defect of a graph $G$ quantifies just how asymmetric its balls $B_1(G)$ of radius 1 are. More formally, we have the following definition and lemma, inspired by \cite{kim2002asymmetry}:
	\begin{definition}
		\label{def:defect}
		Let $G = (V, E)$ be a graph, and let $\sigma, \pi \in S_n$ be permutations of $V$. The {\em defect} of a vertex $v$ with respect to $\sigma$ and $\pi$ is
		\[ D_{\sigma, \pi}(v) = \abs{\sigma(B_1(v)) \,\,\Delta\,\, B_1(\pi(v))} \]
		where $\Delta$ denotes the symmetric difference. Analogously, we define the {\em defect} of the graph $G$ with respect to $\sigma$ and $\pi$ as $D_{\sigma, \pi}(G) = \max_{v \in V} D_{\sigma, \pi}(v)$. 
		
	\end{definition}
	
	\begin{lemma}
		\label{lem:defect}
		Let $G = (V, E)$ be a graph and $\sigma \in S_n$ a permutation of $V$. Then, $\sigma \in \mathrm{aut}(B_1(G))$ if and only if there exists another permutation $\pi \in S_n$ such that $D_{\sigma, \pi}(G) = 0$. Moreover, $\sigma \in \mathrm{aut}(G)$ if and only if $D_{\sigma, \sigma}(G) = 0$. 
		
	\end{lemma}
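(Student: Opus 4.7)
The plan is to reduce both equivalences to a direct unpacking of the definition of defect. Since $\abs{A \Delta B} = 0$ iff $A = B$, the condition $D_{\sigma,\pi}(G) = 0$ is equivalent to the system of equalities $\sigma(B_1(v)) = B_1(\pi(v))$ for every $v \in V$. Both parts of the lemma then follow by interpreting this system.

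For the second (simpler) claim, I would take $\pi = \sigma$ and observe that the condition $\sigma(B_1(v)) = B_1(\sigma(v))$ for all $v$ is, after writing $B_1(v) = \{v\} \cup N(v)$ and noting $\sigma(v)$ lies in both sides, equivalent to $\sigma(N(v)) = N(\sigma(v))$ for all $v$, which is precisely the adjacency-preservation condition defining $\mathrm{aut}(G)$. Thus $D_{\sigma,\sigma}(G) = 0$ iff $\sigma \in \mathrm{aut}(G)$.

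For the first claim, I would first compute from the definition $\sigma(G) = (V, \{\{\sigma(u),\sigma(v)\} : \{u,v\} \in E\})$ that $B_1^{\sigma(G)}(\sigma(u)) = \sigma(B_1(u))$, whence $B_1(\sigma(G)) = \{\sigma(B_1(u)) : u \in V\}$. The $(\Leftarrow)$ direction is then immediate: given a permutation $\pi$ with $\sigma(B_1(v)) = B_1(\pi(v))$ for all $v$, the fact that $\pi$ permutes $V$ gives $B_1(\sigma(G)) = \{B_1(\pi(u)) : u \in V\} = B_1(G)$, so $\sigma \in \mathrm{aut}(B_1(G))$.

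The main obstacle is the $(\Rightarrow)$ direction: given $\sigma \in \mathrm{aut}(B_1(G))$, we know $\sigma(B_1(u)) = B_1(v)$ for some $v$, but we must choose $\pi(u) = v$ consistently so that the resulting map is a bijection of $V$. I would handle this by a matching argument: partition $V$ into equivalence classes $C_i$ of vertices with common ball $S_i$, observe that $\sigma$ induces a permutation $\tau$ of the distinct balls via $\sigma(S_i) = S_{\tau(i)}$, and note that the identification of $B_1(\sigma(G))$ with $B_1(G)$ as $V$-indexed collections (combined with $\sigma$ being a bijection of $V$) forces $\abs{C_i} = \abs{C_{\tau(i)}}$ for every $i$. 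Gluing together any family of bijections $C_i \to C_{\tau(i)}$ then yields the required $\pi$.
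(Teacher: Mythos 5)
Your proposal tracks the paper's proof almost exactly: the unpacking of $D_{\sigma,\pi}(G)=0$ into the system $\sigma(B_1(v)) = B_1(\pi(v))$, the reduction of the second claim to adjacency preservation via $\sigma(N(v)) = N(\sigma(v))$, the $(\Leftarrow)$ direction via $B_1(\sigma(G)) = \{\sigma(B_1(u)) : u \in V\}$, and the observation that in the distinct-balls case the map $v \mapsto v'$ is automatically a bijection are all present in the paper in the same form. The one place you go beyond the paper is the general case of $(\Rightarrow)$, where balls may coincide; the paper simply asserts that ``there will always be a bijective $\pi$'' without argument, while you attempt an actual proof by gluing bijections between twin-classes. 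Your overall matching strategy is the right shape, but the pivotal step is not justified as written: you claim that ``the identification of $B_1(\sigma(G))$ with $B_1(G)$ as $V$-indexed collections'' forces $\abs{C_i} = \abs{C_{\tau(i)}}$. The hypothesis $\sigma \in \mathrm{aut}(B_1(G))$ only gives equality of $B_1(\sigma(G))$ and $B_1(G)$ as \emph{sets} of balls --- that is how the paper defines it --- not as $V$-indexed multisets. The multiplicity of the ball $\sigma(S_i) = S_{\tau(i)}$ in the $V$-indexed collection $B_1(\sigma(G))$ is $\abs{\sigma(C_i)} = \abs{C_i}$, computed in $\sigma(G)$, whereas its multiplicity in $B_1(G)$ is $\abs{C_{\tau(i)}}$, computed in $G$; asserting these are equal is exactly the multiset statement you are trying to prove, so the argument is circular at that point. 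To close it you would need an additional structural input, e.g.\ that the twin-class partition is recoverable from the set of distinct balls alone (each class is a trace-class of the set system $\{S_j\}$, using $u \in B_1(v) \Leftrightarrow v \in B_1(u)$), so that $\sigma$ genuinely permutes the classes.

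Two mitigating remarks. First, the paper's own proof has the identical hole --- it waves at the non-distinct-balls case in one sentence --- so you are not worse off than the source. Second, the gap is immaterial for how the lemma is used: in Theorem~\ref{thm:voting_exist} and Lemma~\ref{lem:rand_reg_asymmetry} the relevant graphs have all balls of radius $1$ distinct (with probability $1-o(1)$), which is precisely the case both you and the paper handle correctly. If you restrict the lemma to that case, your write-up is complete.
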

	\begin{proof}
		By definition, $\sigma$ is an automorphism of $B_1(G)$ if and only if for every vertex $v \in V$, there is another vertex $v' \in V$ such that $\sigma(B_1(v)) = B_1(v')$. In the case where all balls $B_1(v)$ are distinct, this is possible if and only if the mapping $v \mapsto v'$ is a permutation, which we will call $\pi$. In the general case, $\pi$ need not be bijective, but there will always be a bijective $\pi$ for which the desired statement holds biconditionally. Since $\sigma(B_1(v)) = B_1(\pi(v))$ for all $v \in V$, $D_{\sigma, \pi}(G) = 0$. The second statement follows from the fact that $\sigma \in \mathrm{aut}(G)$ if and only if $\{\sigma(u), \sigma(v)\} \in E$ for every edge $\{u, v\} \in E$. This is equivalent to saying that $B_1(\sigma(v)) = \sigma(B_1(v))$ for every vertex $v \in V$.
		
	\end{proof}
	
	Our proof of Theorem \ref{thm:graph_exist} closely follows the approach of \cite{kim2002asymmetry}. Subsection \ref{sec:gnp} proves a concentration inequality about the defect of an Erd\H{o}s-R{\'e}nyi random graph, and Subsection \ref{sec:rand_reg} uses that result to prove an analogous statement (from which Theorem \ref{thm:graph_exist} follows) about random regular graphs.
	
	\subsection{A Result About Erd\H{o}s-R{\'e}nyi Random Graphs}
	\label{sec:gnp}
	The Erd\H{o}s-R{\'e}nyi random graph distribution $G(n, p)$ is the probability space over graphs with vertex set $V = [n]$, where each of the $\binom{n}{2}$ possible edges is present independently with probability $p$. In this subsection, we prove the following concentration inequality about the defect of a graph drawn from the Erd\H{o}s-R{\'e}nyi distribution $G(n, p)$. The proof closely follows that of Theorem 3.1 in \cite{kim2002asymmetry}:
	
	\begin{theorem}
		\label{thm:gnp_asymmetry}
		Let $G \sim G(n, p)$ with $p = \omega(\log n / n)$ and $1 - p = \omega(\log n / n)$. Then, there exist permutations $\sigma, \pi \in S_n$ such that $\abs{ \{v \in V : \sigma(v) \ne v \mbox{ or } \pi(v) \ne v \} } \ge \ell$ and $D_{\sigma, \pi}(G) < (2 - o(1)) n p (1 - p)$ with probability at most
		\begin{align*}
		4 \sum_{k = \ell}^{n} \exp(-c \epsilon^2 n p (1 - p) + 2 \log n)^k = o(1)
		\end{align*}
		for some constant $c > 0$ and any $\epsilon = \epsilon(n)$ such that $\epsilon = o(1)$ and $\epsilon^2 n p (1 - p) = \omega(\log n)$. For example, $p = \log(n)^2 / n$ and $\epsilon = \log(n)^{-1/3}$ suffice.
		
	\end{theorem}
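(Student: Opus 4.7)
The plan is to follow the template of the proof of Theorem 3.1 in \cite{kim2002asymmetry}. For each fixed pair of permutations $(\sigma, \pi)$ with $|M| = k$ where $M = \{v : \sigma(v) \neq v \text{ or } \pi(v) \neq v\}$, I will bound the probability $\P{D_{\sigma,\pi}(G) < (2-\epsilon)np(1-p)}$ by roughly $\exp(-c\epsilon^2 np(1-p))^k$, then union-bound over the at most $n^{2k}$ such pairs and sum over $k \geq \ell$ to match the stated form $\sum_{k \geq \ell} \exp(-c\epsilon^2 np(1-p) + 2\log n)^k$.

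For a single ``good'' vertex $v$ --- one satisfying $\pi(v) \neq v$ --- write
\[
D_{\sigma,\pi}(v) = \sum_{u \in V} X_u, \quad X_u = \one[\{v, \sigma^{-1}(u)\} \in E] \oplus \one[\{\pi(v), u\} \in E].
\]
Because $v \neq \pi(v)$, the two edges inside $X_u$ coincide for only $O(1)$ values of $u$; for the remaining $n - O(1)$ values, $X_u$ is the XOR of two independent $\mathrm{Bernoulli}(p)$ edge indicators and hence has mean $2p(1-p)$. Summing gives $\E{D_{\sigma,\pi}(v)} = 2np(1-p) - O(1)$, and a Chernoff bound on this sum of (nearly) independent Bernoulli indicators yields
\[
\P{D_{\sigma,\pi}(v) < (2-\epsilon)np(1-p)} \leq \exp(-c_0 \epsilon^2 np(1-p))
\]
for some absolute constant $c_0 > 0$.

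To promote this single-vertex bound to a bound on $D_{\sigma,\pi}(G) = \max_v D_{\sigma,\pi}(v)$, I greedily select a subset $M' \subseteq M$ of $\Omega(k)$ good vertices such that the ``star supports'' at $v$ and at $\pi(v)$ (for $v \in M'$) overlap only in $O(1)$ edges pairwise. The per-vertex defect events at distinct $v \in M'$ then depend on nearly disjoint edges of $G$, and a conditioning argument that absorbs the $O(k^2)$ shared edges into a lower-order error renders them effectively independent, so the per-vertex Chernoff bounds multiply to give $\P{D_{\sigma,\pi}(G) < (2-\epsilon)np(1-p)} \leq \exp(-c\epsilon^2 np(1-p))^k$ after re-absorbing constants. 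The union bound $\binom{n}{k}^2 (k!)^2 \leq n^{2k}$ over permutation pairs with exactly $k$ moved vertices, summed over $k \geq \ell$, then yields the claim.

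The main obstacle is the joint concentration step: the per-vertex defect events nominally share edges of $G$, so the single-vertex Chernoff bounds are not literally independent, and one must verify that the shared edges contribute only lower-order error --- a technique imported directly from \cite{kim2002asymmetry}. A secondary subtlety is producing $\Omega(k)$ good vertices from $M$: since $|M_\sigma| + |M_\pi| \geq |M| = k$, at least one of $|M_\sigma|, |M_\pi|$ is $\geq k/2$, and a symmetric argument (using $|\sigma(B_1(v)) \Delta B_1(\pi(v))| = |B_1(v) \Delta \sigma^{-1}(B_1(\pi(v)))|$ to swap the roles of $\sigma$ and $\pi$) handles the case where $|M_\sigma|$ is large but $|M_\pi|$ is not.
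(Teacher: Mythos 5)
Your high-level skeleton (union bound over the at most $n^{2k}$ pairs moving $k$ vertices, summed over $k \ge \ell$) matches the paper, but your concentration step is genuinely different and leaves a real gap. The paper applies the Bernoulli-aware bounded-difference inequality (Lemma \ref{lem:large_dev}) once, to the aggregate $X = \sum_{v \in U} D_{\sigma,\pi}(v)$: each edge toggle changes $X$ by at most $6$, the variance proxy scales like $k n p(1-p)$, so a deviation of $2\epsilon k n p(1-p)$ costs $\exp(-c\epsilon^2 k n p(1-p))$, and the factor of $k$ needed to beat $n^{2k}$ comes for free from $\max_v D_{\sigma,\pi}(v) \ge X/k$. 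Your plan of multiplying per-vertex tail bounds instead requires the decoupling you flag as ``the main obstacle,'' and that step is not a routine verification: each star support has about $2n$ potential edges and shares $\Theta(k)$ of them with the other stars, so for $k = \Theta(n)$ the shared edges are \emph{not} lower order --- conditioning on earlier stars can consume a constant fraction of the randomness (and of the conditional mean) of later ones. As written, the key inequality $\Pr[D_{\sigma,\pi}(G) < (2-\epsilon)np(1-p)] \le \exp(-c\epsilon^2 np(1-p))^{k}$ is asserted, not proved, and this is precisely the difficulty the paper's sum-then-average device is built to avoid.

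The second gap is in your case analysis and cannot be repaired. You correctly observe that the per-vertex expectation computation needs $\pi(v) \ne v$, not merely $v \in U$; but the identity $\abs{\sigma(B_1(v))\,\Delta\,B_1(\pi(v))} = \abs{B_1(v)\,\Delta\,\sigma^{-1}(B_1(\pi(v)))}$ does not swap the roles of $\sigma$ and $\pi$. Writing the right-hand side as $\sum_u \one[\{u,v\}\in E] \oplus \one[\{\sigma(u),\pi(v)\}\in E]$, the two edges still coincide for every $u$ fixed by $\sigma$ whenever $\pi(v)=v$, so the expected defect at such a $v$ is about $2\abs{M_\sigma}p(1-p)$, not $2np(1-p)$. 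Indeed no argument can close this case: take $\pi = \mathrm{id}$ and $\sigma$ a product of $\lceil \ell/2 \rceil$ disjoint transpositions; then $\abs{U} \ge \ell$ and, deterministically, $D_{\sigma,\mathrm{id}}(G) \le \abs{U}+2$, which is far below $(2-o(1))np(1-p)$ whenever $\ell = o(np(1-p))$ --- the regime in which the theorem is later applied. Your more careful bookkeeping has in fact surfaced a point the paper's own proof passes over: the claim that $v \in U$ alone forces $\mathbb{E}[D_{\sigma,\pi}(v)] = 2(n-O(1))p(1-p)$ is unjustified when $\pi(v)=v$ and $\sigma$ has small support. So the statement needs an added hypothesis on the support of $\pi$ (or a restriction to the pairs actually produced by Lemma \ref{lem:defect}) before either your argument or the paper's can be completed at this step.
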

	
	Intuitively, Theorem \ref{thm:gnp_asymmetry} implies that $B_1(G)$ from almost every $G \sim G(n, p)$ is highly asymmetric. We state the probability bound in a particularly technical way to facilitate the proof of an analogous theorem about random $np$-regular graphs in Subsection \ref{sec:rand_reg}. The centerpiece of our proof of Theorem \ref{thm:gnp_asymmetry} is the following concentration inequality, which represents a refinement of McDiarmid's bounded difference inequality \cite{mcdiarmid1989bdi} to the special case of Bernoulli random variables:
	
	\begin{lemma}[Alon et al.\ \cite{alon1997nearly}]
		\label{lem:large_dev}
		Let $X$ be a random variable on a probability space generated by finitely many independent Bernoulli random variables $Y_i \sim \mathrm{Bern}(p_i)$. Let $\delta$ be such that changing any $Y_i$ (keeping all others the same) can change $X$ by at most $\delta$. Define $\Delta^2 = \delta^2 \sum_{i} p_i (1 - p_i)$. Then, for all $0 < t < 2 \Delta / \delta$, $\P{\abs{X - \E[]{X}} > t \Delta} \le 2 e^{-t^2/4}$. 
		
	\end{lemma}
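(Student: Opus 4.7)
The plan is to prove this sharpened concentration inequality via a Doob martingale combined with a Bernstein-type MGF bound. Let $\mathcal{F}_k$ denote the $\sigma$-algebra generated by $Y_1, \ldots, Y_k$, and set $X_k = \E{X \mid \mathcal{F}_k}$ so that $(X_k)$ is a martingale with $X_0 = \E{X}$, $X_n = X$, and differences $d_k = X_k - X_{k-1}$. The bounded-difference hypothesis immediately gives $|d_k| \le \delta$. The refinement over vanilla McDiarmid comes from controlling the conditional variance: writing $f(y) = \E{X \mid \mathcal{F}_{k-1}, Y_k = y}$, we have $d_k = f(Y_k) - \E{f(Y_k) \mid \mathcal{F}_{k-1}}$, and since $|f(1) - f(0)| \le \delta$, this yields $\mathrm{Var}(d_k \mid \mathcal{F}_{k-1}) = p_k(1-p_k)(f(1)-f(0))^2 \le p_k(1-p_k)\delta^2$.

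Next I would apply Bennett's moment generating function estimate conditionally to each $d_k$: for any mean-zero random variable $Z$ with $|Z| \le M$ and $\mathrm{Var}(Z) \le \sigma^2$,
\[ \E{e^{\lambda Z}} \le \exp\!\left(\frac{\sigma^2}{M^2}(e^{\lambda M} - 1 - \lambda M)\right), \]
which in the regime $\lambda M \le 1$ simplifies via the elementary inequality $e^x - 1 - x \le x^2$ to $\E{e^{\lambda Z}} \le \exp(\lambda^2 \sigma^2)$. Applying this to $Z = d_k$ with $M = \delta$ and iterating by the tower property gives $\E{e^{\lambda (X - \E{X})}} \le \exp(\lambda^2 \Delta^2)$ for all $\lambda$ with $\lambda\delta \le 1$. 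Markov's inequality yields $\P{X - \E{X} > t\Delta} \le \exp(-\lambda t \Delta + \lambda^2 \Delta^2)$, and the optimal choice $\lambda = t/(2\Delta)$ produces the exponent $-t^2/4$. The constraint $\lambda\delta \le 1$ becomes exactly $t \le 2\Delta/\delta$, matching the lemma's hypothesis; a symmetric argument for the lower tail supplies the factor of $2$.

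The main obstacle is carefully tracking constants through the Bennett step so as to land on exactly $-t^2/4$: the whole point of the refinement over McDiarmid's bound (which would produce $n\delta^2$ in place of $\Delta^2 = \delta^2 \sum_i p_i(1-p_i)$) is to retain the individual variance factors $p_i(1-p_i)$ rather than crudely bounding them by $1/4$ or $1$, and the looser constant $1/4$ (rather than the sub-Gaussian $1/2$) in the final exponent is exactly the slack incurred by using $e^x - 1 - x \le x^2$ uniformly on $[0,1]$ rather than the sharper $(e^x-1-x)/x^2 \to 1/2$ as $x \to 0$. Once the conditional variance computation and the uniform MGF bound are in place, the optimization and the range restriction on $t$ fall out automatically.
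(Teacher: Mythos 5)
The paper offers no proof of this lemma at all: it is imported as a black box from Alon, Kim and Spencer \cite{alon1997nearly}, so there is no internal argument to compare against. Your proposal is a correct, self-contained derivation, and it is essentially the standard route to this kind of variance-sensitive refinement of McDiarmid (and close in spirit to the original source). The two computations that carry the refinement both check out: for the Doob martingale $X_k = \mathbb{E}[X \mid Y_1, \dots, Y_k]$ with $f(y) = \mathbb{E}[X \mid \mathcal{F}_{k-1}, Y_k = y]$, independence of the $Y_i$ gives $|f(1) - f(0)| \le \delta$, and the difference $d_k$ equals $(1-p_k)(f(1)-f(0))$ when $Y_k = 1$ and $-p_k(f(1)-f(0))$ when $Y_k = 0$, whence $|d_k| \le \delta$ and $\mathrm{Var}(d_k \mid \mathcal{F}_{k-1}) = p_k(1-p_k)(f(1)-f(0))^2 \le p_k(1-p_k)\delta^2$ --- exactly where the factors $p_i(1-p_i)$ enter $\Delta^2$. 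The conditional Bennett estimate combined with $e^x - 1 - x \le (e-2)x^2 \le x^2$ on $[0,1]$ gives $\mathbb{E}\bigl[e^{\lambda(X - \mathbb{E}[X])}\bigr] \le e^{\lambda^2 \Delta^2}$ for $\lambda\delta \le 1$, and the choice $\lambda = t/(2\Delta)$ produces the exponent $-t^2/4$ while converting the MGF constraint into precisely the stated range $t \le 2\Delta/\delta$. The only steps worth writing out explicitly in a final version are (i) the use of independence in passing from the pointwise bounded-difference hypothesis to $|f(1)-f(0)| \le \delta$ (you must average the coordinate-wise bound over $Y_{k+1}, \dots, Y_n$), and (ii) the monotonicity of $z \mapsto (e^{\lambda z} - 1 - \lambda z)/z^2$ that underlies the Bennett moment generating function bound.
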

	
	\begin{proof}[Proof (Theorem \ref{thm:gnp_asymmetry}).]
		Let $\sigma, \pi \in S_n$ be permutations of the vertices of $G$, and let $U = \{ v \in V : \sigma(v) \ne v \mbox{ or } \pi(v) \ne v \}$ be the vertices not fixed in place by $\sigma$ and $\pi$. Let $k = \abs{U}$, and assume that $k > 0$ for non-triviality. Next, let $X = \sum_{v \in U} D_{\sigma, \pi}(v)$. We will apply Lemma \ref{lem:large_dev} to the random variable $X$ later in the proof.
		
		For any vertex $v \in U$, let $E_1^{uv}$ be the event that $u \in \sigma(B_1(v)) \setminus B_1(\pi(v))$ and let $E_2^{uv}$ be the event that $u \in B_1(\pi(v)) \setminus \sigma(B_1(v))$. $E_1^{uv}$ occurs if and only if $\{\sigma^{-1}(u), v\} \in E$ and $\{u, \pi(v)\} \notin E$, while $E_2^{uv}$ occurs if and only if $\{u, \pi(v)\} \in E$ and $\{\sigma^{-1}(u), v\} \notin E$. Since $v$ is moved by at least one of $\sigma$ or $\pi$ (by the assumption that $v \in U$), this implies that
		\begin{align*}
		\E[]{D_{\sigma, \pi}(v)} &= \sum_{u \in V} \P{E_1^{uv} \cup E_2^{uv}}
		= \begin{cases}
		2 (n - 1) p (1 - p) & \pi(\sigma(v)) = v \\
		2 (n - 2) p (1 - p) & \mbox{otherwise}
		\end{cases}
		\end{align*}
		where the terms $(n - 1)$ and $(n - 2)$ account for cases where $u = \pi(v)$ and $\sigma^{-1}(u) = v$. Therefore,
		\begin{align*}
		\E[]{X} = \sum_{v \in U} \E[]{D_{\sigma, \pi}(v)} = (2 - o(1)) k n p (1 - p)
		\end{align*}
		Now, we analyze how the presence of edges in $G$ influences the value of $X$. First, we note that $X$ only depends on edges of the graph incident to vertices in $U$. There are $k (n - k) + \binom{k}{2}$ such edges. Second, adding or deleting an edge $\{u, v\}$ from $G$ can only change the values of the following six summands, each by at most 1: $D_{\sigma, \pi}(u)$, $D_{\sigma, \pi}(v)$, $D_{\sigma, \pi}(\sigma^{-1}(u))$, $D_{\sigma, \pi}(\sigma^{-1}(v))$, $D_{\sigma, \pi}(\pi^{-1}(u))$, and $D_{\sigma, \pi}(\pi^{-1}(v))$. This means that $X$ satisfies the hypothesis of Lemma \ref{lem:large_dev} with parameters $\delta = 6$ and $\Delta^2 = 36 (k (n - k) + \binom{k}{2}) p (1 - p) = \Theta(k n p(1 - p))$. So for some constant $c > 0$,
		\begin{align*}
		\P{\abs{X - \E[]{X}} > 2 \epsilon k n p (1 - p)} \le \exp(-c \epsilon^2 k n p (1 - p))	
		\end{align*}
		Therefore, with probability at least $1 - \exp(-c \epsilon^2 k n p (1 - p))$, 
		\begin{align*}
		D_{\sigma, \pi}(G) \ge \max_{v \in U} D_{\sigma, \pi}(v) \ge \frac{1}{k} \qty(\E[]{X} - 2 \epsilon k n p (1 - p)) = (2 - o(1)) n p (1 - p)
		\end{align*}
		To bound the probability that this bound holds for all $\sigma, \pi \in S_n$ for which $k = \abs{U} \ge \ell$, we will take a union bound. There are at most $\binom{n}{k} k! \le n^k$ permutations that exactly fix $n - k$ vertices, so there are at most $\sum_{i = 0}^{k} \sum_{j = 0}^{k} n^i n^j \le 4 n^{2k}$ {\em pairs} of permutations $\pi$ and $\sigma$ that fix exactly $n - k$ vertices between them, i.e.\ for which $\abs{U} = k$. Therefore, there exists a non-trivial pair of permutations $\pi$ and $\sigma$ for which $\abs{U} \ge \ell$ and $D_{\sigma, \pi}(G) < (2 - o(1)) n p (1 - p)$ with probability at most
		\begin{align*}
		\sum_{k = \ell}^{n} 4 n^{2k} \exp(-c \epsilon^2 k n p (1 - p)) = 4 \sum_{k = \ell}^{n} \exp(-c \epsilon^2 n p (1 - p) + 2 \log n)^k
		\end{align*}
		
	\end{proof}
	
	\subsection{Proof of Theorem \ref{thm:graph_exist}}
	\label{sec:rand_reg}
	
	Theorem \ref{thm:graph_exist} follows immediately from Lemmas \ref{lem:codegree} and \ref{lem:rand_reg_asymmetry}:
	
	\begin{lemma}[Krivelevich et al.\ \cite{krivelevich2001random}]
		\label{lem:codegree}
		Let $G \sim \mathcal{G}_{n, d}$. If $\sqrt{n} \log n \le d \le n - n / \log_2 n$, then with probability $1 - o(1)$, $\abs{B_1(u) \cap B_1(v)} = (1 + o(1)) d^2 / n$ for every pair of vertices $u \ne v$ in $G$. If instead $\log n \le d \le \sqrt{n} \log n$, then with probability $1 - o(1)$, there exists a constant $\epsilon > 0$ such that $\abs{B_1(u) \cap B_1(v)} < d^{1-\epsilon}$ for every pair of vertices $u \ne v$ in $G$.
	\end{lemma}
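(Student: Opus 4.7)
The plan is to pass to the configuration (pairing) model: a uniformly random perfect matching on $nd$ half-edges partitioned into $n$ groups of $d$ each. Conditioned on the resulting multigraph being simple, this yields a uniform sample from $\mathcal{G}_{n,d}$, and for the assumed range $d \le n/2$ the conditioning event has probability bounded below by a positive quantity depending only polynomially on $d^2/n$, so any tail bound of strength $o(n^{-2})$ in the pairing model transfers to $\mathcal{G}_{n,d}$ and can absorb a union bound over the $\binom{n}{2}$ pairs $\{u, v\}$.

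Fix distinct vertices $u, v$ and write $X = \abs{N(u) \cap N(v)}$; since $\abs{B_1(u) \cap B_1(v)}$ differs from $X$ by at most $2$, the additive error is negligible compared to the target $d^2/n$. Decomposing $X = \sum_{w \ne u, v} \one[w \in N(u) \cap N(v)]$, a direct calculation in the pairing model gives $\P{w \in N(u) \cap N(v)} = (1 + o(1)) d^2 / n^2$ (each of $uw, vw$ is present with probability $(1 + o(1)) d/n$, and the two events are nearly independent since they involve disjoint half-edges away from $w$), so $\E{X} = (1 + o(1)) d^2 / n$ by linearity. For concentration I would use an edge-swap exposure martingale: swapping two matched pairs of half-edges changes $X$ by $O(1)$, so McDiarmid's bounded-difference inequality --- or its sharper Bernoulli refinement, Lemma \ref{lem:large_dev} --- yields $\P{\abs{X - \E{X}} > \epsilon \cdot d^2/n} \le \exp(-\Omega(\epsilon^2 d^2 / n))$, which for $d \ge \sqrt{n} \log n$ is $o(n^{-3})$ even for a slowly vanishing $\epsilon = o(1)$, comfortably enough for the union bound.

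The main obstacle is the sparser regime $\log n \le d \le \sqrt{n} \log n$, where $\E{X}$ may itself be smaller than $\log n$ so no multiplicative concentration around the mean is possible. Here I would instead prove only the upper tail $X < d^{1-\epsilon}$, using the fact that for any fixed $T \subseteq V \setminus \{u, v\}$ the probability in the pairing model that $T \subseteq N(u) \cap N(v)$ is at most $(C d/n)^{2\abs{T}}$ for a universal constant $C$, obtained by iteratively conditioning on the partners of the $2d$ half-edges incident to $u$ and $v$. Setting $\abs{T} = \lceil d^{1-\epsilon} \rceil$ and union-bounding over $T$ and over pairs $(u, v)$ closes the argument, since $\binom{n}{d^{1-\epsilon}} (Cd/n)^{2 d^{1-\epsilon}}$ is easily made $o(n^{-2})$ in this range. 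The detailed bookkeeping is carried out in Krivelevich et al., and the statement is imported here essentially as a black box.
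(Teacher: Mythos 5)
First, note that the paper does not prove this lemma at all --- it is imported verbatim from Krivelevich, Sudakov, Vu and Wormald \cite{krivelevich2001random} --- so treating the statement as a black box, as your final sentence does, is exactly what the paper itself does. The problem is with the sketch you give before retreating to the citation: the transfer from the configuration model to $\mathcal{G}_{n,d}$ fails in essentially the entire degree range the lemma covers. The probability that the pairing model produces a simple graph is not ``bounded below by a quantity polynomial in $d^2/n$''; it is $\exp(-(d^2-1)/4 - d^3/(12n) + o(1))$ for $d = o(\sqrt{n})$, and smaller still beyond that, hence superpolynomially small as soon as $d = \omega(\sqrt{\log n})$. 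Since the lemma assumes $d \ge \log n$, conditioning on simplicity costs a factor of at least $e^{\Theta(\log^2 n)}$, and in the dense case $d \ge \sqrt{n}\log n$ it costs $e^{\Theta(n\log^2 n)}$. Your pairing-model tail bounds --- $\exp(-\Omega(\epsilon^2 d^2/n))$ from the edge-swap martingale, and roughly $(e C^2 d^{1+\epsilon}/n)^{d^{1-\epsilon}}$ from the upper-tail union bound --- are nowhere near strong enough to survive division by the simplicity probability: one would need tails of strength $e^{-\omega(d^2)}$, which these methods cannot deliver, since the codegree genuinely fluctuates on the scale $\sqrt{d^2/n}$ and its Poisson-type tails bottom out around $e^{-O(d^2/n)}$.

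This is precisely why the cited result is nontrivial: for large $d$ Krivelevich et al.\ work directly in the uniform model, using the McKay--Wormald asymptotic enumeration formula for graphs with given dense degree sequences to estimate the probability that prescribed edges are present or absent, and in sparser ranges they use switching arguments; neither reduces to the naive configuration-model conditioning. Your expectation computation and the shape of the two-regime case split (multiplicative concentration when $\E{X} = d^2/n \gg \log n$, upper tail only when it is not) are sensible, and the union-bound arithmetic within each regime is fine as far as it goes; it is only the model-transfer step that is unsound, and unfortunately it is the step carrying all the weight. If you want anything more than a citation here, you must either invoke the enumeration/switching machinery explicitly or restrict to a toy regime such as $d = O(\sqrt{\log n})$, which is disjoint from the hypotheses of the lemma.
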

	
	\begin{lemma}
		\label{lem:rand_reg_asymmetry}
		Let $G \sim \mathcal{G}_{n, d}$, with $d = \omega(\log n)$ and $d \le n / 2$. With probability $1 - o(1)$, $\abs{\mathrm{aut}(B_1(G))} = 1$, i.e.\ $B_1(G)$ is totally asymmetric.
	\end{lemma}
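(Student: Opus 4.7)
The plan is to imitate the proof of Theorem \ref{thm:gnp_asymmetry} by establishing an analogous concentration bound on the defect $D_{\sigma,\pi}(G)$ under $\mathcal{G}_{n,d}$, then union-bounding via Lemma \ref{lem:defect}. By Lemma \ref{lem:defect}, $\abs{\mathrm{aut}(B_1(G))} > 1$ iff some pair $(\sigma,\pi) \in S_n \times S_n$ with $\sigma \neq \mathrm{id}$ has $D_{\sigma,\pi}(G) = 0$; since any non-identity permutation moves at least two vertices, the set $U := \{v : \sigma(v) \neq v \text{ or } \pi(v) \neq v\}$ of such a pair has $k := \abs{U} \geq 2$, so it suffices to rule out all such pairs with high probability.

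For a fixed pair with $\abs{U} = k$, the expected-defect computation from the proof of Theorem \ref{thm:gnp_asymmetry} transfers to $\mathcal{G}_{n,d}$ at $p = d/n$ with only lower-order corrections, since each edge of $\mathcal{G}_{n,d}$ has marginal probability $d/(n-1)$ and pairwise edge correlations are $O(1/n)$. This gives $\E[]{X} = (2 - o(1))\, k\, d(1 - d/n) = \Theta(kd)$ for $X := \sum_{v \in U} D_{\sigma,\pi}(v)$.

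The main obstacle is replacing Lemma \ref{lem:large_dev}, which relied on edge independence, with a concentration inequality for $X$ under the dependent edge distribution of $\mathcal{G}_{n,d}$. Following \cite{kim2002asymmetry}, I would sample $G$ via the configuration model --- uniformly pairing $nd$ half-edges and conditioning on simplicity --- and construct a Doob martingale by exposing pairings one at a time. The key observation is that only pairings incident to a half-edge of $U \cup \pi(U)$ can alter $X$ (at most $2kd$ such pairings), and each such pairing changes $X$ by $O(1)$ through the constantly many summands $D_{\sigma,\pi}(\cdot)$ it affects. An Azuma--Hoeffding bound with these adaptive differences yields $\P{\abs{X - \E[]{X}} > 2\epsilon k d(1 - d/n)} \leq \exp(-c\epsilon^2 k d(1 - d/n))$. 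The remaining technical step is absorbing the multiplicative loss from conditioning on simplicity --- potentially sizeable in the dense end of the $d = \omega(\log n)$ regime --- for which a sandwiching/contiguity argument between $\mathcal{G}_{n,d}$ and $G(n, d/n)$ conditioned on a near-regular degree profile is the natural tool.

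With the tail bound in hand, the union-bound analysis mirrors that of Theorem \ref{thm:gnp_asymmetry}: since the number of pairs $(\sigma,\pi)$ with $\abs{U} = k$ is at most $4n^{2k}$, the probability that any non-trivial pair satisfies $D_{\sigma,\pi}(G) < (2 - o(1))\, d(1 - d/n)$ (in particular $D_{\sigma,\pi}(G) = 0$) is at most $4 \sum_{k \geq 2} \exp(-c\epsilon^2 d(1 - d/n) + 2\log n)^k$. Since $d(1 - d/n) \geq d/2$ in our regime and $d = \omega(\log n)$, choosing $\epsilon = o(1)$ with $\epsilon^2 d = \omega(\log n)$ makes each summand $o(1)^k$, so the total is $o(1)$ and the lemma follows.
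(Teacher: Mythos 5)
Your overall architecture --- reduce to the defect via Lemma \ref{lem:defect}, prove a per-pair tail bound for $X=\sum_{v\in U}D_{\sigma,\pi}(v)$, then union bound over the at most $4n^{2k}$ pairs with $\abs{U}=k$ --- is the same as the paper's, and your union-bound arithmetic is fine \emph{conditional on} the tail bound $\exp(-c\epsilon^2 k d(1-d/n))$ holding under $\mathcal{G}_{n,d}$ uniformly down to $k=2$. The genuine gap is that neither of the tools you invoke delivers that bound for small $k$ across the range $\omega(\log n)\le d\le n/2$. In the configuration model, the probability that the random pairing is simple decays like $\exp(-\Theta(d^2))$, so the loss from conditioning on simplicity swamps a tail bound of size $\exp(-c\epsilon^2 kd)$ whenever $k$ is small and $d$ is polynomial in $n$; the same arithmetic kills the paper's own transfer device ($\P{A\mid B}\le\P{A}/\P{B}$ with $\P{G(n,d/n)\mbox{ is }d\mbox{-regular}}\ge\exp(-nd^{1/2+\delta})$ from Lemma \ref{lem:gnp_regular}) for small $k$, which is exactly why the paper only uses the probabilistic transfer when $\abs{U}$ exceeds a threshold $\ell$ ($\ell=d^{1-\delta}$ for $d\ge n^{3/4}$, $\ell=nd^{-1/3}$ for $d\le n^{3/4}$) large enough that $\epsilon^2\ell d$ beats $nd^{1/2+\delta}$. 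Sandwiching/contiguity does not rescue you either: the defect is a symmetric-difference statistic, not monotone in the edge set, so a containment $G(n,p_1)\subseteq G\subseteq G(n,p_2)$ transfers neither the upper nor the lower tail of $X$.

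What the paper does instead for pairs with $\abs{U}\le\ell$ is entirely deterministic given two whp structural properties of $G$: the pointwise lower bound (Inequality \ref{eqn:defect}) $D_{\sigma,\pi}(v)\ge 2d-\abs{B_1(v)\cap U}-\abs{B_1(\pi(v))\cap U}-2\abs{B_1(v)\cap B_1(\pi(v))}$, combined with the codegree concentration of Lemma \ref{lem:codegree} (to control the last term) and, in the sparse regime $d\le n^{3/4}$ where $\ell=nd^{-1/3}$ can exceed $d$, the edge-span bound of Lemma \ref{lem:set_edge_span} (to find a vertex $v\in U$ with $\abs{B_1(v)\cap U}=o(d)$). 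This small-support case is the part of the argument that genuinely requires new ideas beyond Theorem \ref{thm:gnp_asymmetry}, and it is the part your proposal does not supply. To repair your write-up you would need either to prove a simplicity-free concentration inequality for $\mathcal{G}_{n,d}$ valid for $k=O(1)$ (not available by the routes you cite), or to adopt the paper's case split and handle $\abs{U}\le\ell$ by the structural argument above.
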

	
	To prove Lemma \ref{lem:rand_reg_asymmetry}, we will need two additional technical lemmas from \cite{kim2002asymmetry}:
	
	\begin{lemma}[Kim et al.\ \cite{kim2002asymmetry}]
		\label{lem:set_edge_span}
		Let $G \sim \mathcal{G}_{n, d}$ with $d \le n^{3/4}$ and $d = \omega(\log n)$. Then, with probability $1 - o(1)$, every subset of vertices in $G$ of size $a \le n d^{-1/3}$ spans at most $d a / \log d$ edges.
	\end{lemma}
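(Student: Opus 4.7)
The plan is to bound, for each fixed subset $S$ of $a$ vertices, the probability that $S$ spans more than $t := da/\log d$ edges, and then take a union bound over all relevant $S$ and $a$.

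First, I would pass to the configuration model on $nd$ half-edges (following \cite{kim2002asymmetry, krivelevich2001random}), computing by direct enumeration the probability that exactly $k$ matching pairs lie entirely within the $da$ half-edges attached to $S$. For $k$ not too large, this probability is well-approximated by a Poisson count with mean $\mu = d a^2 / (2n)$, yielding
\[
\P{e(S) \ge t} \;\le\; \bigl(e\mu / t\bigr)^{t}
\]
up to subexponential factors, the same bound one obtains from binomial tails. The transfer back to $\mathcal{G}_{n,d}$ is standard for $d \le n^{3/4}$: invoke contiguity with the configuration model in the sparse regime, or, in the denser regime, absorb the simple-graph conditioning into subexponential slack, since the final tail bound will be exponentially small in $da$.

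Second, I would substitute $t = da/\log d$. For $a \le n d^{-1/3}$,
\[
\frac{e\mu}{t} \;=\; \frac{e\, a \log d}{2n} \;\le\; \frac{e \log d}{2\, d^{1/3}} \;=\; o(1),
\]
so $\log(e\mu/t) \le -(1/3 - o(1)) \log d$ and hence $\P{e(S) \ge t} \le \exp(-\Omega(da))$. Note that the event $e(S) \ge t$ is impossible unless $\binom{a}{2} \ge t$, i.e.\ $a \gtrsim 2 d / \log d$, so only $a$ in this range need be considered.

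Finally, I would take a union bound over subsets. There are $\binom{n}{a} \le \exp(a \log n)$ sets of size $a$, and the hypothesis $d = \omega(\log n)$ gives $a \log n = o(da)$, so the failure probability at a fixed size is at most $\exp(-\Omega(da))$. Summing over $a$ from $\Theta(d/\log d)$ up to $n d^{-1/3}$ contributes at most a factor polynomial in $n$, while the smallest term is already $\exp(-\Omega(d^2/\log d)) = o(1/n)$, because $d = \omega(\log n)$ forces $d^2 / \log d = \omega(\log n)$. The primary obstacle is not the Chernoff-style tail estimate but the clean transfer from the configuration model to $\mathcal{G}_{n,d}$ uniformly across the full range $d \le n^{3/4}$; Kim et al.\ navigate this using switching arguments, and I would follow the same route.
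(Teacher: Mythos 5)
First, note that the paper does not prove this lemma at all: it is imported verbatim from Kim--Sudakov--Vu \cite{kim2002asymmetry}, so there is no in-paper argument to compare against. Judged on its own terms, your tail estimate and union bound are correct and match the standard calculation: in the configuration model the probability that a fixed $a$-set spans at least $t$ edges is at most $(e\mu/t)^t$ with $\mu = da^2/(2n)$, plugging in $t = da/\log d$ and $a \le nd^{-1/3}$ gives $e\mu/t = o(1)$ and hence a bound of $\exp(-\Omega(da))$, the event is vacuous unless $a \gtrsim 2d/\log d$, and $\binom{n}{a} \le \exp(a\log n)$ is negligible since $d = \omega(\log n)$. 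All of that is sound.

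The one genuinely problematic step is your first fallback for transferring from the configuration model to $\mathcal{G}_{n,d}$. Contiguity is unavailable for growing $d$, and "absorbing the simple-graph conditioning into subexponential slack'' fails quantitatively at exactly the critical scale: conditioning on simplicity costs a factor of $1/\mathbb{P}[\text{simple}] = \exp(\Theta(d^2))$ already for $d = o(\sqrt{n})$ (and far more, on the order of $\exp(\Theta(d^3/n))$ extra, as $d$ approaches $n^{3/4}$), whereas the dominant term of your union bound, at $a \asymp d/\log d$, is only $\exp(-\Omega(d^2/\log d))$. The conditioning factor therefore swamps the tail and the transfer as stated proves nothing. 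The correct route --- which you do name as your second option --- is to bypass the conditioning entirely and establish the edge-correlation bound $\mathbb{P}[F \subseteq E(G)] \le (O(d/n))^{|F|}$ directly in $\mathcal{G}_{n,d}$ for small edge sets $F$ via switchings (as in Krivelevich--Sudakov--Vu--Wormald and \cite{kim2002asymmetry}); with that in hand, bounding $\mathbb{P}[e(S) \ge t]$ by $\binom{\binom{a}{2}}{t}(O(d/n))^t$ reproduces your $(e\mu/t)^t$ estimate up to constants and the rest of your argument goes through unchanged. So the sketch is essentially the right proof, but you should delete the contiguity/absorption option and commit to the switching argument, since that is where all the real work lies.
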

	
	\begin{lemma}[Kim et al.\ \cite{kim2002asymmetry}]
		\label{lem:gnp_regular}
		Let $G \sim G(n, d/n)$. For any constant $\delta > 0$ and $n$ sufficiently large, $G$ is $d$-regular with probability at least $\exp(-nd^{1/2+\delta})$. 
	\end{lemma}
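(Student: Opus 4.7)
The plan is to compute the probability directly from the definition as
\[ \P{G \text{ is } d\text{-regular}} \;=\; N(n, d) \cdot (d/n)^{nd/2} (1 - d/n)^{\binom{n}{2} - nd/2}, \]
where $N(n, d)$ denotes the number of labeled $d$-regular graphs on $n$ vertices, and then lower bound $N(n, d)$ using enumerative asymptotics. The workhorse is the Bollob\'as configuration model, which yields $N(n, d) \geq \frac{(nd)!}{(nd/2)!\, 2^{nd/2} (d!)^n} \cdot q(n, d)$, where $q(n, d)$ is the probability that a uniformly random perfect matching on $nd$ half-edges (with $d$ at each vertex) produces a simple graph.

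Substituting the configuration-model bound, applying Stirling's approximation to each factorial, and Taylor-expanding $\log(1 - d/n)$ to second order, the leading exponential terms of order $nd$ cancel between $N(n, d)$ and the binomial-type factor $p^{nd/2}(1 - p)^{\binom{n}{2} - nd/2}$, leaving a lower bound of the form $(2\pi d)^{-n/2} \cdot q(n, d) \cdot e^{-O(d^2/n + n/d)}$. Taking logarithms,
\[ -\log \P{G \text{ is } d\text{-regular}} \;\leq\; \frac{n}{2}\log(2\pi d) \;-\; \log q(n, d) \;+\; O(d^2/n + n/d). \]
Since $d = \omega(\log n)$, the first term is $O(n \log n)$ and the third is $O(n)$, and both are $o(n d^{1/2+\delta})$, so it suffices to show $-\log q(n, d) = o(n d^{1/2+\delta})$ over the whole range $\omega(\log n) \leq d \leq n/2$.

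For the sparse regime $d = o(\sqrt{n})$, the classical McKay-Wormald estimate $-\log q(n, d) = O(d^2)$ is $o(n d^{1/2+\delta})$ precisely when $d = o(n^{2/(3-2\delta)})$. In the denser range, one invokes the dense McKay-Wormald formula, or its interpolating extension by Liebenau-Wormald, to obtain $N(n, d) \sim \binom{\binom{n}{2}}{nd/2} \, e^{O(n)}$; combined with the local-CLT estimate $\P{\abs{E(G)} = nd/2} = \Omega(1/\sqrt{nd})$, this yields $\P{G \text{ is } d\text{-regular}} \geq e^{-O(n)}$ in the dense regime, well within the target $\exp(-n d^{1/2+\delta})$.

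The main obstacle is the moderate regime around $d$ between $n^{2/3}$ and $n^{3/4}$, where the sparse-case simplicity correction $e^{-\Theta(d^2)}$ becomes too large while the dense-case formula does not yet apply cleanly. Bridging this gap requires either appealing to Liebenau-Wormald's uniform enumeration, or a direct configuration-model analysis that bounds $q(n, d)$ more carefully in this window. Because the target bound $\exp(-n d^{1/2+\delta})$ is exponentially weaker than the true probability $\exp(-\Theta(n \log d))$, there is substantial slack even for crude intermediate estimates; the central bookkeeping challenge is the case analysis on $d$, which splits the proof into sparse, moderate, and dense subranges to be combined into a single uniform lower bound.
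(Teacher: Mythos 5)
The first thing to note is that the paper contains no proof of Lemma \ref{lem:gnp_regular}: it is quoted, with attribution, from Kim, Sudakov and Vu \cite{kim2002asymmetry} and used as a black box in the proof of Lemma \ref{lem:rand_reg_asymmetry}. So there is no in-paper argument to compare yours against; I can only judge your proposal as a free-standing proof, and as such it has a genuine gap and a false key step.

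Your framework --- the exact identity $\mathbb{P}\bigl[G\ \mbox{is}\ d\mbox{-regular}\bigr] = N(n,d)\,(d/n)^{nd/2}(1-d/n)^{\binom{n}{2}-nd/2}$ combined with enumeration asymptotics for $N(n,d)$ --- is the standard and correct route. However, the reduction you state, that it suffices to prove $-\log q(n,d) = o(nd^{1/2+\delta})$ over the whole range $\omega(\log n)\le d\le n/2$, is unachievable: for $d\gg n^{2/3}$ the configuration model's simplicity probability genuinely satisfies $-\log q(n,d)=\Theta(d^2)$, which exceeds the budget $nd^{1/2+\delta}$ once $d\gg n^{2/(3-2\delta)}$. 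The source of the discrepancy is your expansion: $\log\bigl[(1-d/n)^{\binom{n}{2}-nd/2}\bigr] = -\tfrac{nd}{2}+\tfrac{d^2}{4}+O(d+d^3/n)$, so after the order-$nd$ cancellation there remains a \emph{positive} $+d^2/4$ credit (not an error of size $O(d^2/n)$, as you wrote), and it is precisely this credit that absorbs the $e^{-\Theta(d^2)}$ loss from $q$. What you actually need is $-\log q(n,d)\le d^2/4+o(nd^{1/2+\delta})$, i.e., the simplicity probability to second order --- and for $d\gg\sqrt n$ that is essentially equivalent to the enumeration problem itself. Three further points. (i) Your sparse window is overstated: $-\log q=O(d^2)$ is classically known only for $d=o(\sqrt n)$, not up to $n^{2/(3-2\delta)}$. (ii) Your dense-regime asymptotics are misquoted: $N(n,d)=\binom{\binom{n}{2}}{nd/2}e^{-\Theta(n\log d)}$, not $\binom{\binom{n}{2}}{nd/2}e^{O(n)}$ (given $nd/2$ edges, each vertex must still hit degree exactly $d$, costing $\Theta(d^{-1/2})$ per vertex), so the conclusion there is $e^{-\Theta(n\log d)}$, not $e^{-O(n)}$; this happens to be harmless since $n\log d=o(nd^{1/2+\delta})$. (Similarly, your ``first term is $O(n\log n)$'' should read $O(n\log d)$: the bound $n\log n=o(nd^{1/2+\delta})$ is false when $d$ is barely $\omega(\log n)$.) (iii) Most importantly, the middle range, roughly $\sqrt n\le d\le n/\log n$, is left unproven, as you acknowledge. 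The only repair you name that actually works is the Liebenau--Wormald uniform enumeration; but that result covers the entire range $\omega(\log n)\le d\le n/2$ at once and yields $\mathbb{P}=e^{-\Theta(n\log d)}$ directly, which both closes the gap and renders your three-regime case analysis unnecessary. Since the paper invokes the lemma across the whole range (cases 1(a) and 2(a) of Lemma \ref{lem:rand_reg_asymmetry}), the hole as it stands is fatal to the proposal.
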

	
	\begin{proof}[Proof (Lemma \ref{lem:rand_reg_asymmetry}).]
		The proof closely follows Section 5 of \cite{kim2002asymmetry}, which proves an analogous result for $\mathrm{aut}(G)$, rather than $\mathrm{aut}(B_1(G))$. Our strategy for adapting their proof is similar to the way we proved Theorem \ref{thm:gnp_asymmetry}. Re-introducing notation from that proof, for any non-trivial pair permutations $\sigma, \pi \in S_n$, we will let $U = \{ v \in V : \pi(v) \ne v \mbox{ or } \sigma(v) \ne v \}$ and $W = V \setminus U$. The following observation is critical:
		\begin{align}
		D_{\sigma, \pi}(v) &\ge \abs{(\sigma(B_1(v)) \cap W) \,\,\Delta\,\, (B_1(\pi(v)) \cap W) } \notag \\
		&= \qty(d + 1 - \abs{\sigma(B_1(v)) \cap U}) + \qty(d + 1 - \abs{B_1(\pi(v)) \cap U}) \notag \\
		& \quad -2 \abs{\sigma(B_1(v)) \cap B_1(\pi(v)) \cap W} \notag \\
		&= 2d + 2 - \abs{B_1(v) \cap U} - \abs{B_1(\pi(v)) \cap U} - 2 \abs{B_1(v) \cap B_1(\pi(v)) \cap W} \label{eqn:nonobvious} \\
		&\ge 2d - \abs{B_1(v) \cap U} - \abs{B_1(\pi(v)) \cap U} - 2 \abs{B_1(v) \cap B_1(\pi(v))} \label{eqn:defect}
		\end{align}
		Inequality \ref{eqn:nonobvious} follows because that $\sigma(v) \in U$ for all $v \in U$ and $\sigma(v) = v$ for all $v \in V \setminus U$. Now, we enumerate cases.
		
		\begin{enumerate}
			\item $n^{3/4} \le d \le n / 2$. Then, Lemma \ref{lem:codegree} tells us that $\abs{B_1(v) \cap B_1(\pi(v))} = (1 + o(1)) d^2 / n$ with probability $1 - o(1)$. If we let $\ell = d^{1-\delta} = o(d)$ for some small constant $\delta > 0$ ($\delta = 1/100$ suffices), we need to analyze two additional sub-cases:
			\begin{enumerate}
				\item $\abs{U} > \ell$. For any graph $G$, let $B_G$ denote the event that there exist permutations $\sigma, \pi \in S_n$ such that $\abs{U} > \ell$ and $D_{\sigma, \pi}(G') < (2 - o(1)) d (1 - d/n)$. By Theorem \ref{thm:gnp_asymmetry},
				\begin{align*}
				\P[G' \sim G(n, d/n)]{B_{G'}} &\le 4 \sum_{k = \ell + 1}^{n} \exp(-c \epsilon^2 d (1 - d/n) + 2 \log n )^k
				\end{align*}
				for an appropriately chosen constant $c$. If we constrain $\epsilon \ge d^{-\delta}$, our assumption that $\ell = d^{1-\delta}$ tells us that this probability is at most $\exp(-\Omega(d^{2-3\delta}))$ and therefore
				\begin{align*}
				\P[G \sim \mathcal{G}_{n, d}]{B_G} &= \P[G' \sim G(n, d/n)]{B_{G'} \mid G' \,\, d\mbox{-regular}} \\
				&\le \P[G' \sim G(n, d/n)]{B_{G'}} / \P[G' \sim G(n, d/n)]{G' \,\, d\mbox{-regular}} \\
				&\le \exp(-\Omega(d^{2 - 3\delta}) + n d^{1/2 + \delta})
				\end{align*}
				The second-to-last line uses the fact that $\P{A \mid B} \le \P{A} / \P{B}$, and the last line invokes Lemma \ref{lem:gnp_regular}, which says that $G'$ is regular with probability at least $\exp(-nd^{1/2+\delta})$. Finally, because we constrain $n^{3/4} \le d \le n / 2$, the final quantity is $o(1)$ as desired.
				
				\item $\abs{U} \le \ell$. Then, $\abs{B_1(v) \cap U} \le \abs{U} = o(d)$ and $\abs{B_1(\pi(v)) \cap U} \le \abs{U} = o(d)$. Substituting these quantities into Inequality \ref{eqn:defect}, we get that with probability $1 - o(1)$, $D_{\sigma, \pi}(v) \ge (2 - o(1)) d (1 - d/n)$ for all non-trivial pairs of permutations $\sigma$ and $\pi$ for which $\abs{U} \le \ell$.
				
			\end{enumerate}
			
			\item $d \le n^{3/4}$ and $d = \omega(\log n)$. Then, by Lemma \ref{lem:codegree}, $\abs{B_1(v) \cap B_1(\pi(v))} = o(d)$ for all vertices $v$ with probability $1 - o(1)$. If we let $\ell = n d^{-1/3} \ge d$, we have two-sub-cases again:
			
			\begin{enumerate}
				\item $\abs{U} > \ell$. The analysis is nearly identical to case 1(a), and proves the same result.
				
				\item $\abs{U} \le \ell$. By Lemma \ref{lem:set_edge_span}, with probability $1 - o(1)$, {\em every} subset $U \subseteq V$ of vertices with $\abs{U} \le \ell$ spans at most $d \abs{U} / \log d = o(d)$ edges. This implies that with probability $1 - o(1)$, {\em every} such $U$ contains a vertex $v \in U$ such that $\abs{B_1(v) \cap U} = o(d)$ and $\abs{B_1(\pi(v)) \cap U} = o(d)$. So substituting into Inequality \ref{eqn:defect}, we get that with probability $1 - o(1)$, $D_{\sigma, \pi}(G) \ge (2 - o(1)) d \ge (2 - o(1)) d (1 - d/n)$ for every non-trivial pair of permutations $\sigma$ and $\pi$ for which $\abs{U} \le \ell$. 
				
			\end{enumerate}
			
		\end{enumerate}
		
		Therefore, we can conclude that with probability $1 - o(1)$, $D_{\sigma, \pi}(G) \ge (2 - o(1)) d (1 - d/n)$ for every non-trivial pair permutations $\sigma$ and $\pi$. By Lemma \ref{lem:defect}, this result implies that $\abs{\mathrm{aut}(B_1(G))} = 1$ with probability $1 - o(1)$. 
		
	\end{proof}
	
	\section{An Explicit Construction}
	\label{sec:explicit}
	
	In this section, we explicitly construct a voting rule that is unbiased, neither transitive-symmetric nor asymmetric, and has winning coalitions of size $O(\sqrt{n})$:
	
	\begin{theorem}
		\label{thm:explicit}
		Let $m = q^2 + q + 1$ for some prime power $q$, and let $n = 11m$. Then, there exists an odd, monotone, unbiased voting rule $\Phi : \{-1, 1\}^n \to \{-1, 1\}$ that is not transitive symmetric and has winning coalitions of size $\lfloor 1.508 \sqrt{n} \rfloor + 5$.
		
	\end{theorem}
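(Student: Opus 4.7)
The plan is to construct $\Phi$ as a hierarchical block composition of two voting rules: the asymmetric $11$-voter rule $\Phi_{11}$ from Theorem \ref{thm:example} as the outer rule, and an inner transitive-symmetric rule $\Phi_m$ on $m = q^2+q+1$ voters built from the projective plane $PG(2,q)$. Define $\Phi_m(x) = y$ whenever some line of $PG(2,q)$ is monochromatic of value $y$, falling back to $\mathrm{maj}_m(x)$ otherwise; since any two points of $PG(2,q)$ lie on a unique line, the line system is intersecting and $\Phi_m$ is a well-defined odd, monotone, transitive-symmetric voting rule with minimum winning coalitions of size $q+1$. Partition $[n]$ into $11$ labeled blocks $V_1, \ldots, V_{11}$ of size $m$, and set
\[ \Phi(x) \;=\; \Phi_{11}\bigl( \Phi_m(x|_{V_1}),\, \ldots,\, \Phi_m(x|_{V_{11}}) \bigr). \]

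Oddness and monotonicity of $\Phi$ pass through the composition. For unbiasedness, under any i.i.d.\ product measure on $\{-1,1\}^n$ the block outputs $\Phi_m(x|_{V_i})$ are i.i.d., and a voter $v \in V_i$ is pivotal in $\Phi$ exactly when $v$ is pivotal in $\Phi_m$ on block $i$ \emph{and} block $i$'s output is pivotal in $\Phi_{11}$; these events involve disjoint coordinates, so their probabilities multiply. The first factor is constant in $v$ by transitive-symmetry of $\Phi_m$, the second is constant in $i$ by unbiasedness of $\Phi_{11}$, so all $n$ voters have equal influence. The minimum winning coalitions of $\Phi$ are exactly the sets $\bigcup_{i \in B} L_i$, where $B$ is a size-$5$ ball of $\Phi_{11}$ and each $L_i$ is a $PG(2,q)$-line in $V_i$: any winning coalition must force at least $5$ blocks, each at the cost of at least $q+1$ voters, so the minimum is $5(q+1)$, and equality forces a line per forced block. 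Since $5q \le 1.508\sqrt{n}$ for all $q \ge 1$ (using $n = 11(q^2+q+1)$ and $1.508^2 \cdot 11 > 25$), we get $5(q+1) \le \lfloor 1.508 \sqrt{n}\rfloor + 5$, matching the stated coalition size.

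The decisive step is showing that $\Phi$ is not transitive-symmetric, which I would handle by proving that the block partition $\{V_1, \ldots, V_{11}\}$ is a canonical invariant of $\mathrm{aut}(\Phi)$. For each unordered pair $\{u,v\}$ of voters, let $N(u,v)$ count the minimum winning coalitions of $\Phi$ containing both; this is $\mathrm{aut}(\Phi)$-invariant. If $u, v \in V_i$, then $N(u,v) = 5 m^4$: choose any of the $5$ balls of $\Phi_{11}$ through vertex $i$, the unique $PG(2,q)$-line through $u$ and $v$ in $V_i$, and any of $m$ lines in each of the four remaining blocks of that ball. If $u \in V_i$ and $v \in V_j$ with $i \ne j$, then $N(u,v) = c_{ij}(q+1)^2 m^3$, where $c_{ij}$ is the number of $\Phi_{11}$-balls containing both $i$ and $j$, and $(q+1)^2$ counts lines through $u$ in $V_i$ and through $v$ in $V_j$. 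These counts can never agree: equality forces $5m = c_{ij}(q+1)^2$, but $\gcd(q+1,\, m) = \gcd(q+1,\, q(q+1)+1) = 1$ makes $(q+1)^2$ coprime to $m$, forcing $(q+1)^2 \mid 5$, impossible for $q \ge 2$. Hence every automorphism of $\Phi$ preserves the block partition, induces an automorphism of $\Phi_{11}$, which by Theorem \ref{thm:example} is trivial, and so no automorphism crosses between blocks; $\Phi$ is therefore not transitive-symmetric.

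The main obstacle I anticipate lies in the completeness part of the minimum-winning-coalition enumeration: one must rule out exotic size-$5(q+1)$ coalitions that mix partial-line and partial-majority contributions across blocks. This reduces to the observation that, for $q \ge 2$, the majority threshold $\lceil (m+1)/2 \rceil$ of a block strictly exceeds $q+1$, so forcing a block's output requires a full $PG(2,q)$-line of $q+1$ voters, while the $5$ forced blocks must form one of the $11$ balls of the graph underlying $\Phi_{11}$ (these being the only size-$5$ minimum winning coalitions of $\Phi_{11}$). With this in hand, the pair-count and the divisibility argument cleanly close the case.
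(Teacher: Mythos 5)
Your proposal is correct, and it reaches the theorem by a route that is recognizably related to, but genuinely different from, the paper's. The paper also builds $\Phi$ by composing the $11$-voter rule of Theorem \ref{thm:example} with the projective-plane rule of \cite{tamuz2019equitable}, but in the opposite nesting order: it partitions the voters into $m$ groups of $11$, applies the asymmetric rule $\Phi_1$ \emph{inside} each group, and aggregates the $m$ group outcomes with the transitive-symmetric rule $\Phi_2$ (this is Lemma \ref{lem:compose}, stated as a general composition lemma). You instead use $11$ blocks of $m$ voters, with the projective-plane rule inside each block and the asymmetric rule outside. Both orders yield coalitions of size $5(q+1)\le\lfloor 1.508\sqrt{n}\rfloor+5$ and essentially the same product argument for unbiasedness. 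The real divergence is in the proof that $\Phi$ is not transitive-symmetric: the paper argues abstractly that any automorphism of the composed rule must factor into an automorphism of $\Phi_2$ acting on blocks together with automorphisms of $\Phi_1$ acting within blocks, and then invokes the non-transitivity of $\Phi_1$; you instead exhibit the block partition as a canonical invariant via the pair-count $N(u,v)$ of minimum winning coalitions, using $\gcd(q+1,m)=1$ to separate the same-block count $5m^4$ from the cross-block count $c_{ij}(q+1)^2m^3$, and then kill the induced block permutation with the triviality of $\mathrm{aut}(B_1(G))$ from Theorem \ref{thm:example}. Your counting argument is more concrete and arguably more airtight than the paper's factorization claim, which is asserted rather than fully justified; the price is that it is tailored to this particular pair of rules, whereas the paper's lemma composes any unbiased non-transitive-symmetric rule with any transitive-symmetric one. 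Your completeness check on the minimum winning coalitions (ruling out mixed line/majority coalitions because $(m+1)/2>q+1$ for $q\ge 2$, and noting that the set of forced blocks must itself be a ball of the $11$-vertex graph) is exactly the point that needs care, and you handle it correctly.
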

	
	We prove Theorem \ref{thm:explicit} by using the procedure we describe in Lemma \ref{lem:compose} to compose the unbiased but asymmetric graphic voting rule $\Phi_1$ of Theorem \ref{thm:example} with a transitive-symmetric voting rule $\Phi_2$ described by \cite{tamuz2019equitable}. Informally, we consider $n_2$ groups of $n_1$ voters each. We let each of these groups make a decision using $\Phi_1$ and then aggregate these group decisions using $\Phi_2$. More formally, 
	
	\begin{lemma}
		\label{lem:compose}
		Let $\Phi_1 : \{-1, 1\}^{n_1} \to \{-1, 1\}$ be an odd, monotone, unbiased voting rule that is not transitive-symmetric and has winning coalitions of size $d_1$. Let $\Phi_2 : \{-1, 1\}^{n_2} \to \{-1, 1\}$ be an odd, monotone, transitive-symmetric voting rule that has winning coalitions of size $d_2$. Then, there exists a voting rule $\Phi : \{-1, 1\}^{n_1 \times n_2} \to \{-1, 1\}$ that has winning coalitions of size $d_1 d_2$, and is odd, monotone, unbiased, and not transitive-symmetric.
	\end{lemma}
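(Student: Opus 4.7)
The plan is to define $\Phi$ as the natural two-layer composition. Identifying $\{-1,1\}^{n_1 n_2}$ with $(\{-1,1\}^{n_1})^{n_2}$ by arranging voters in $n_2$ groups of $n_1$, set
\[
\Phi(x) = \Phi_2\bigl(\Phi_1(x_{\cdot,1}),\ldots,\Phi_1(x_{\cdot,n_2})\bigr).
\]
Oddness and monotonicity of $\Phi$ follow immediately from the same properties of $\Phi_1$ and $\Phi_2$. I would then show that $S \subseteq [n_1]\times[n_2]$ is a winning coalition of $\Phi$ iff $J(S) := \{ j \in [n_2] : S \cap ([n_1]\times\{j\}) \text{ is a WC of } \Phi_1 \}$ is a WC of $\Phi_2$. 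Minimizing $|S|$ subject to this forces $J(S)$ to be a MWC of $\Phi_2$ and each $S \cap ([n_1]\times\{j\})$ to be a MWC of $\Phi_1$, so the MWCs of $\Phi$ are exactly the sets $\bigcup_{j\in T}(S_j\times\{j\})$ with $T$ a MWC of $\Phi_2$ and each $S_j$ a MWC of $\Phi_1$, so each such set has size $d_1 d_2$.

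Unbiasedness of $\Phi$ then follows from Lemma \ref{lem:unbiased}. The key observation is that pivotality factors: $(i,j)$ is pivotal for a WC $S$ of $\Phi$ iff $i$ is pivotal for $S \cap ([n_1]\times\{j\})$ in $\Phi_1$ and $j$ is pivotal for $J(S)$ in $\Phi_2$. Counting WCs of $\Phi$ of a given size for which $(i,j)$ is pivotal, by stratifying on the profile of column sizes and summing over the choices of $J(S)$ and of $S \cap ([n_1]\times\{j'\})$ for each $j'$, the count factors as a product of $\Phi_1$-pivotal counts (for $i$) and $\Phi_2$-pivotal counts (for $j$), each of which is voter-independent by the unbiasedness of $\Phi_1$ and the transitive-symmetry of $\Phi_2$.

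The main obstacle is showing $\Phi$ is not transitive-symmetric. The plan is to prove the sub-claim that every $\tau \in \mathrm{aut}(\Phi)$ preserves the group partition $\{G_j = [n_1]\times\{j\}\}_{j \in [n_2]}$, so that $\tau(G_j) = G_{\rho(j)}$ for some $\rho \in S_{n_2}$. Granted this, writing $\tau(i,j) = (\tau_j(i), \rho(j))$ and using that $\tau$ preserves the MWC hypergraph forces $\rho \in \mathrm{aut}(\Phi_2)$ and each $\tau_j \in \mathrm{aut}(\Phi_1)$, so $\mathrm{aut}(\Phi)$ embeds into the wreath product $\mathrm{aut}(\Phi_1) \wr \mathrm{aut}(\Phi_2)$. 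The orbits of this wreath product on $[n_1]\times[n_2]$ are of the form $O \times [n_2]$, where $O$ is an orbit of $\mathrm{aut}(\Phi_1)$ on $[n_1]$ (using the transitivity of $\mathrm{aut}(\Phi_2)$ on $[n_2]$); since $\Phi_1$ is not transitive-symmetric, $\mathrm{aut}(\Phi_1)$ has at least two orbits, hence $\mathrm{aut}(\Phi)$ has at least two orbits, and $\Phi$ is not transitive-symmetric.

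The hardest step is the sub-claim itself, namely that $\tau$ respects the group partition. The plan is to recover the blocks of each MWC $C$ of $\Phi$ intrinsically from the MWC set system, for instance via the candidate characterization that $B \subseteq C$ of size $d_1$ is a block iff there exists another MWC $C' $ of $\Phi$ with $C' \cap C = C \setminus B$, i.e.\ $B$ can be swapped out for a disjoint replacement. Genuine blocks admit such swaps---either by replacing the $\Phi_1$-MWC in a single group by a disjoint one, or by swapping a group in $T$ for a neighboring group via an adjacent $\Phi_2$-MWC---but the converse direction, that no set crossing multiple groups admits such a swap, requires a careful case analysis of how size-$d_1 d_2$ MWCs of $\Phi$ can decompose, and is where I expect the bulk of the technical work and possibly additional mild hypotheses on $\mathcal{M}_1$ and $\mathcal{M}_2$ to be needed.
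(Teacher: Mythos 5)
Your definition of $\Phi$, the oddness/monotonicity argument, and the construction of a winning coalition of size $d_1 d_2$ match the paper. Your unbiasedness argument is a valid combinatorial rendering (via Lemma \ref{lem:unbiased}) of what the paper does probabilistically --- the paper simply factors the influence of $(i,j)$ as the product of $i$'s influence on $\Phi_1$ under $p^{\otimes n_1}$ and $j$'s influence on $\Phi_2$ under the induced i.i.d.\ measure on group outcomes; your counting version works but is more laborious, and you should be aware that the factorization of pivotal counts requires the full characterization of winning coalitions of $\Phi$ that you sketch, whereas the influence factorization needs only independence of the columns.

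The genuine gap is in the non-transitive-symmetry argument, and you have correctly identified it yourself: your proof is conditional on the sub-claim that every $\tau \in \mathrm{aut}(\Phi)$ preserves the partition into groups $G_j$, which you propose to establish by recovering the blocks intrinsically from the minimal-winning-coalition hypergraph, and you concede that the converse direction of your candidate characterization is unproven and may require additional hypotheses on $\Phi_1$ and $\Phi_2$. A lemma proved only under unstated extra hypotheses is not a proof of the lemma as stated. There is also a concrete flaw in the setup of that characterization: you assert that the minimal winning coalitions of $\Phi$ all have size $d_1 d_2$, but the hypothesis only says $\Phi_1$ and $\Phi_2$ \emph{have} winning coalitions of size $d_1$ and $d_2$, not that all minimal ones have that size. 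Indeed, for the $\Phi_1$ of Theorem \ref{thm:example} the minimal winning coalitions are the eleven balls of size $5$ \emph{together with} the minimal majority coalitions of size $6$ containing no ball, so the minimal winning coalitions of $\Phi$ come in several sizes and your size-$d_1$ block test does not apply to all of them. The paper sidesteps the hypergraph entirely: it works with the functional equation $\Phi(\sigma^{-1}(X)) = \Phi(X)$ for all $X$, extracts from it an induced $\sigma_{2,X} \in \mathrm{aut}(\Phi_2)$ permuting the columns and induced $\sigma_{1,X} \in \mathrm{aut}(\Phi_1)$ acting within columns, and then concludes from the existence of $u, v \in [n_1]$ lying in different orbits of $\mathrm{aut}(\Phi_1)$ that no automorphism of $\Phi$ can carry $(u,j)$ to $(v,j')$. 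Your final orbit argument (the wreath product $\mathrm{aut}(\Phi_1) \wr \mathrm{aut}(\Phi_2)$ has orbits $O \times [n_2]$ with $O$ an orbit of $\mathrm{aut}(\Phi_1)$, hence at least two orbits) is correct once the embedding is granted, but the embedding is exactly the part you have not supplied.
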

	
	\begin{proof}
		For $X \in \{-1, 1\}^{n_1 \times n_2}$, let $X^{(i)} = (x_{1,i}, \ldots, x_{n_1,i})^T \in \{-1, 1\}^{n_2}$ be the $i^{th}$ column vector of $X$, and define
		\[ \Phi(X) = \Phi_2\qty(\Phi_1(X^{(1)}), \ldots, \Phi_1(X^{(n_2)})) \]
		$\Phi$ is odd and monotone because $\Phi_1$ and $\Phi_2$ are also odd and monotone. We can form a winning coalition of size $d_1 d_2$ for $\Phi$ by taking a winning coalition for $\Phi_2$ and assigning to each member of that coalition an entire winning coalition for $\Phi_1$.
		
		Now, we show that $\Phi$ is unbiased. Because $\Phi_1$ is unbiased, every $x_{i,j}$ has an equal probability of being pivotal to $\Phi_1(X^{(j)})$. Because $\Phi_2$ is unbiased, every $\Phi_1(X^{(j)})$ has an equal probability of being pivotal to $\Phi(X) = \Phi_2(\Phi_1(X^{(1)}), \ldots, \Phi_1(X^{(n_2)}))$. So every $x_{i,j}$ has an equal probability of being pivotal to $\Phi(X)$, i.e.\ $\Phi$ is unbiased.
		
		Finally, we show that $\Phi$ is not transitive-symmetric. Let $\sigma \in \mathrm{aut}(\Phi)$, so $\Phi(X) = \Phi(\sigma^{-1}(X))$ for all $X \in \{-1, 1\}^{n_1 \times n_2}$. By the definition of $\Phi$ and the fact that $\Phi_2$ is non-constant, each $X$ must have some $\sigma_{2,X} \in \mathrm{aut}(\Phi_2)$ such that
		\begin{align*}
		\qty(\Phi_1(\sigma^{-1}(X)^{(1)}), \ldots, \Phi_1(\sigma^{-1}(X)^{(n_2)})) = \qty(\Phi_1(X^{\sigma_{2,X}^{-1}(1)}), \ldots, \Phi_1(X^{\sigma_{2,X}^{-1}(n_2)}))
		\end{align*}
		Because this must hold for all $X$ and $\Phi_1$ is also non-constant, this further implies that each $X$ must also have some $\sigma_{1,X} \in \mathrm{aut}(\Phi_1)$ such that 
		\[ \sigma^{-1}(X)^{(i)} = \sigma_{1,X}^{-1}(X^{(\sigma_{2,X}^{-1}(i))})\]
		for all $i \in [n_2]$. However, because $\Phi_1$ is not transitive-symmetric, there must be some $u, v \in [n_1]$ such that $\sigma_{1,X}(u) \ne v$ for all $\sigma_{1,X} \in \mathrm{aut}(\Phi_1)$. Therefore, we can conclude that $\Phi$ is not transitive-symmetric. 
		
	\end{proof}
	
	\begin{proof}[Proof (Theorem \ref{thm:explicit})]
		Theorem \ref{thm:example} describes a voting rule $\Phi_1 : \{-1, 1\}^{11} \to \{-1, 1\}$ that is odd, monotone, unbiased, not transitive-symmetric, and has winning coalitions of size 5. Theorem 6 of \cite{tamuz2019equitable} constructs a voting rule $\Phi_2 : \{-1, 1\}^m \to \{-1, 1\}$ that is odd, monotone, transitive-symmetric, and has winning coalitions of size $\lceil \sqrt{m} \rceil$. Lemma \ref{lem:compose} provides an algorithm to compose $\Phi_1$ and $\Phi_2$ to obtain a voting rule $\Phi : \{-1, 1\}^{11m} \to \{-1, 1\}$ that is odd, monotone, unbiased, not transitive-symmetric, and has winning coalitions of size $5 \lceil \sqrt{m} \rceil \le \frac{5 \sqrt{n}}{\sqrt{11}} + 5 \le \lfloor 1.508 \sqrt{n} \rfloor + 5$.
		
	\end{proof}
	
	\section{Conclusions and Future Work}
	\label{sec:conclusion}
	We have provided two proofs that there exist voting rules which are unbiased but not transitive-symmetric. At the crux of our arguments is a class of voting rules defined by $d$-regular graphs of diameter 2.
	
	Section \ref{sec:prob} provides a probabilistic proof that there exist voting rules that are unbiased and totally asymmetric (not just not transitive-symmetric). Theorem \ref{thm:graph_exist} shows that with probability $1 - o(1)$, the voting rules defined by random $d$-regular graphs for $d \ge \sqrt{n} \log n$ are totally asymmetric and have all voters participate in exactly $d + 1$ minimum winning coalitions of size $d + 1$. Theorem \ref{thm:voting_exist} shows that if we relax the lower bound on the degree to $d \ge 0.293n - 1$, these voting rules are also unbiased with probability $1 - o(1)$. 
	
	Section \ref{sec:explicit} explicitly constructs a class of voting rule that is unbiased, but neither transitive-symmetric nor totally asymmetric. On the other hand, this class of voting rules has much smaller winning coalitions of size $O(\sqrt{n})$, which differs by only a constant factor from the lower bound of $\lceil \sqrt{n} \rceil$ that we proved in Lemma \ref{lem:lowerbound}.
	
	We see two possible directions for future work. First, we conjecture that there exist voting rules that are unbiased, totally asymmetric, and have winning coalitions of size $\sqrt{n} \log n + 1$ (i.e.\ the lower bound of Theorem \ref{thm:voting_exist} can be made to match that of Theorem \ref{thm:graph_exist}). A more sophisticated proof of Lemma \ref{lem:codegree} may be a fruitful avenue to such a result.
	
	Second, given the existence of voting rules that are unbiased, neither transitive-symmetric nor totally asymmetric, and have winning coalitions of size $O(\sqrt{n})$, we conjecture that Theorems \ref{thm:graph_exist} and \ref{thm:voting_exist} can be refined to prove that there exist $d$-regular graphs whose associated voting rules are unbiased but not transitive-symmetric (rather than totally asymmetric) as long as $d \ge \lceil \sqrt{n} \rceil - 1$ (rather than $d \ge \sqrt{n} \log n$). In other words, we believe that the lower bound of Lemma \ref{lem:lowerbound} is tight not only in general, but also for our construction using graphic voting rules. 
	
	\section*{Acknowledgments}
	I would like to thank Omer Tamuz for pointing me to some useful references and for his helpful feedback on drafts of this paper. I would like to thank Leonard Schulman for his assistance in verifying the correctness of some of the proofs in Section \ref{sec:prob}.
	
	\bibliographystyle{amsplain}
	\bibliography{references}
	
\end{document}